\documentclass[letterpaper, 10 pt, conference]{ieeeconf}


\def\bsg{{\boldsymbol{g}}}

\def\bsv{{\boldsymbol{v}}}

\def\bsx{{\boldsymbol{x}}}

\def\bsH{{\boldsymbol{H}}}

\def\bsK{{\boldsymbol{K}}}
\def\bsL{{\boldsymbol{L}}}

\def\bsM{{\boldsymbol{M}}}

\def\bsQ{{\boldsymbol{Q}}}

\usepackage{geometry}
\geometry{verbose,tmargin=1in,bmargin=1in,lmargin=1in,rmargin=1in}
\usepackage{verbatim}
\usepackage{float}
\usepackage{bm}
\usepackage[cmex10]{amsmath}
\usepackage{amssymb}

\usepackage{graphicx}
\graphicspath{{./figures/}}

\usepackage{amsthm}

\usepackage{graphicx}
\usepackage{subcaption}
\usepackage{breakurl}
\usepackage{tikz}
\usepackage{verbatim}
\usepackage{forest}
\usepackage[T1]{fontenc}
\usepackage{pifont}
\usepackage{mathtools}
\usetikzlibrary{shapes,arrows,positioning,calc}
\usepackage{cite}
\usepackage[english]{babel}
\usepackage[utf8]{inputenc}

\usepackage{algorithm,algorithmic}

\newtheorem{theorem}{Theorem}
\newtheorem{assumption}{Assumption}

\newtheorem{lemma}{Lemma}

\newtheorem{definition}{Definition}
\newtheorem{remark}{Remark}

\usepackage{color}

\makeatletter

\tikzstyle{block} = [draw, thick, rectangle, 
minimum height=3em, minimum width=6em]
\tikzstyle{pt} = [coordinate]
\tikzset{
	block/.style={
		draw, 
		rectangle, 
		minimum height=0.8cm, 
		minimum width=0.8cm, 
		align=center
	}
}

%





\newcommand{\diag}{{\rm diag}}

\DeclareMathOperator{\col}{col}

\DeclareMathOperator{\Deg}{Deg}





\usetikzlibrary{arrows,shapes,positioning,shadows,trees}

\tikzset{
  basic/.style  = {draw, text width=5cm, drop shadow, font=\sffamily, rectangle},
  root/.style   = {basic, rounded corners=2pt, thin, align=center,
                   fill=yellow!60},
  level 2/.style = {basic, rounded corners=6pt, thin,align=center, fill=yellow!30,
                   text width=8em},
  level 3/.style = {basic, thin, align=left, fill=pink!40, text width=3cm}
}

\usepackage{adjustbox}
\usepackage{booktabs} 
\usepackage{makecell}

\allowdisplaybreaks
\begin{document}

\IEEEoverridecommandlockouts
\title{\bf Convergence Analysis of Nonconvex Distributed Stochastic Zeroth-order Coordinate Method}

\author{Shengjun~Zhang, Yunlong~Dong, Dong~Xie, Lisha~Yao, Colleen~P.~Bailey, Shengli~Fu
\thanks{Shengjun Zhang, Dong Xie and Colleen P. Bailey are with OSCAR Laboratory, Department of Electrical Engineering, University of North Texas, Denton, TX 76207 USA. {\tt\small \{ShengjunZhang, DongXie\}@my.unt.edu}, {\tt\small Colleen.Bailey@unt.edu}.}
\thanks{Yunlong Dong is with School of Artificial Intelligence and Automation, the MOE Key Laboratory of Image Processing and Intelligent Control, and the State Key Laboratory of Digital Manufacturing Equipment and Technology, Huazhong University of Science and Technology, Wuhan 430074, China. {\tt\small dyl@hust.edu.cn}.}
\thanks{Lisha Yao and Shengli Fu are with the Department of Electrical Engineering, University of North Texas, Denton, TX 76207 USA. {\tt\small Lisha Yao@my.unt.edu}, {\tt\small Shengli.Fu@unt.edu}.}
}

\maketitle

\begin{abstract}            

This paper investigates the stochastic distributed nonconvex optimization problem of minimizing a global cost function formed by the summation of $n$ local cost functions. We solve such a problem by involving zeroth-order (ZO) information exchange.
In this paper, we propose a \underline{ZO} \underline{d}istributed pr\underline{i}mal--du\underline{a}l \underline{c}oordinate method ~(ZODIAC) to solve the stochastic optimization problem. 
Agents approximate their own local stochastic ZO oracle along with coordinates with an adaptive smoothing parameter.
We show that the proposed algorithm achieves the convergence rate of $\mathcal{O}(\sqrt{p}/\sqrt{T})$ for general nonconvex cost functions.
We demonstrate the efficiency of proposed algorithms through a numerical example in comparison with the existing state-of-the-art centralized and distributed ZO algorithms.
\end{abstract}

\section{Introduction\label{sec:Introduction}}

In this paper, we investigate stochastic distributed nonconvex optimization problems with only zeroth-order (ZO) information available.
Such problems can be mathematically summarized in the form:
\begin{align}\label{zerosg:eqn:xopt}
 \min_{x\in \mathbb{R}^p} f(x)=\frac{1}{n}\sum_{i=1}^n\mathbb{E}_{\xi_i}[F_i(x,\xi_i)],
\end{align}
where $n$ is the total number of agents, $x$ is the decision variable, $\xi_i$ is a random variable with dimension $p$, and $F_i(\cdot,\xi_i): \mathbb{R}^{p}\rightarrow \mathbb{R}$ is the stochastic function. 
Agent $i$ is only able to access its own stochastic ZO information $F_i(x,\xi_i)$.
For each agent $i$, the local cost function $f_i(x)$ is the expectation of the ZO information $\mathbb{E}_{\xi_i}[F_i(x,\xi_i)]$.
Agents communicate with their neighbors via an undirected communication network graph $\mathcal G$.

Many algorithms based on first-order gradient information have been proposed in the literature and applied to various applications. 
Unfortunately, in many scenarios, the deceptively simple gradient information is not available or too expensive \cite{conn2009introduction,audet2017derivative,larson2019derivative}.
For instance, in simulation based optimization problems \cite{spall2005introduction}, the gradient information of objective functions is not available.
In the machine learning community, universal attacking of deep neural networks is considered a black-box optimization problem \cite{goodfellow2014explaining, liu2018zeroth, chen2019zo}, where it is too difficult to derive the explicit form of the gradient.
Moreover, in the era of big data, people are dealing with complex data generating processes problems, however, the cost functions of such problems cannot be expressed explicitly\cite{chen2017zoo}.
In addition, decentralized optimization methods in general perform better than centralized ones in terms of robustness, data privacy and computation reduction \cite{nedic2018distributed,Koloskova2019decentralized,yang2019survey}.


Starting from early 1960s, derivative-free optimization (DFO) has been applied in several numerical and statistical problems \cite{hooke1961direct,matyas1965random,nelder1965simplex}.
With the rise of machine learning in the past decades, DFO has gained more attention and been investigated deeply.
Recently, the most popular DFO method is utilizing the ZO information, which is treated as the counterpart of the first-order gradient.
In recent years, distributed optimization problems obtained more and more attention as they can be applied into massive networked systems including 
power systems, sensor networks, smart buildings, and smart
manufacturing \cite{yang2019survey}.
More specifically, \cite{yuan2014randomized,sahu2018distributed,wang2019distributed,
pang2019randomized} focus specifically on distributed ZO gradient descent methods.
Yuan et al proposed distributed ZO with the push-sum technique \cite{yuan2015gradient}, Yu et al extended mirror descent algorithm to distributed settings \cite{yu2019distributed}, and Tang et al provided distributed ZO gradient tracking algorithms \cite{tang2020distributedzero}. Both Hajinezhad et al and Yi et al utilized primal--dual techniques combined with ZO information\cite{hajinezhad2019zone,yi2019linear} and Beznosikov et al considered distributed ZO sliding algorithms \cite{beznosikov2019derivative}. 

Most of the aforementioned algorithms can handle the deterministic form of \eqref{zerosg:eqn:xopt}, e.g. $\min_{x\in \mathbb{R}^p} f(x)=\frac{1}{n}\sum_{i=1}^nF_i(x)$, where $F_i(x)$ is a deterministic function.
For stochastic distributed settings in the exact form of \eqref{zerosg:eqn:xopt}, Hajjinezhad et al are able to solve, however, it requires a very high sampling size of $\mathcal{O}(T)$ to achieve the convergence rate of $\mathcal{O}(p^2n/T)$, which is not practically suitable for high dimensional decision variables \cite{hajinezhad2019zone}.

In this paper, we propose a \underline{ZO} \underline{d}istributed pr\underline{i}mal--du\underline{a}l \underline{c}oordinate method (ZODIAC) 
to solve the stochastic optimization problem~\eqref{zerosg:eqn:xopt}.
To our best knowledge, compared to other existing ZO distributed algorithms, ZODIAC is the only one estimating ZO oracle along with coordinates, which improves the gradient estimation error \cite{pmlr-v97-ji19a}. Compared to \cite{hajinezhad2019zone}, ZODIAC has lower sample requirements and is favorable for large-scale distributed optimization problems in practice.
We show that our algorithm finds a stationary point with a convergence rate of $\mathcal{O}(\sqrt{p}/\sqrt{T})$ for general nonconvex cost functions using a fixed stepsize, which is faster than the centralized ZO algorithms in \cite{ghadimi2013stochastic,lian2016Comprehensive,zhang2020improving,liu2018zerothieee,
liu2019signsgd,balasubramanian2018zeroth,chen2019zo} and the distributed ZO primal algorithm in \cite{tang2020distributedzero}.

The rest of this paper is organized as follows.
Section~\ref{zerosg:sec-preliminary} introduces some preliminary concepts. Sections~\ref{sec:alg} introduces ZODIAC and analyzes its convergence properties. Simulations are presented in Section~\ref{sec:simulations}. Finally, concluding remarks are offered in Section~\ref{sec:conclusions}.

\noindent {\bf Notations}: $\mathbb{N}_0$ and $\mathbb{N}_+$ denote the set of nonnegative and positive integers, respectively. $[n]$ denotes the set $\{1,\dots,n\}$ for any $n\in\mathbb{N}_+$.
$\|\cdot\|$ represents the Euclidean norm for vectors or the induced 2-norm for matrices. $\mathbb{B}^p$ and $\mathbb{S}^p$ are the unit ball and sphere centered around the origin in $\mathbb{R}^p$ under Euclidean norm, respectively. Given a differentiable function $f$, $\nabla f$ denotes the gradient of $f$.
${\bm 1}_n$ (${\bm 0}_n$) denotes the column one (zero) vector of dimension $n$. $\col(z_1,\dots,z_k)$ is the concatenated column vector of vectors $z_i\in\mathbb{R}^{p_i},~i\in[k]$. ${\bm I}_n$ is the $n$-dimensional identity matrix. Given a vector $[x_1,\dots,x_n]^\top\in\mathbb{R}^n$, $\diag([x_1,\dots,x_n])$ is a diagonal matrix with the $i$-th diagonal element being $x_i$.  The notation $A\otimes B$ denotes the Kronecker product
of matrices $A$ and $B$. Moreover, we denote $\bsx=\col(x_1,\dots,x_n)$, $\bar{x}=\frac{1}{n}({\bm 1}_n^\top\otimes{\bm I}_p)\bsx$, $\bar{\bsx}={\bm 1}_n\otimes\bar{x}$.
$\rho(\cdot)$ stands for the spectral radius for matrices and $\rho_2(\cdot)$ indicates the minimum
positive eigenvalue for matrices having positive eigenvalues.


\section{Preliminaries}\label{zerosg:sec-preliminary}
The following section discusses some background on graph theory, smooth functions, the gradient
estimator, and additional assumptions used in this paper.

\subsection{Graph Theory}
Agents communicate with their neighbors through an underlying network, which is modeled by an undirected graph $\mathcal G=(\mathcal V,\mathcal E)$, where $\mathcal V =\{1,\dots,n\}$ is the agent set, $\mathcal E
\subseteq \mathcal V \times \mathcal V$ is the edge set, and $(i,j)\in \mathcal E$ if agents $i$ and $j$ can communicate with each other.
For an undirected graph $\mathcal G=(\mathcal V,\mathcal E)$, let $\mathcal{A}=(a_{ij})$ be the associated weighted adjacency matrix with $a_{ij}>0$ if $(i,j)\in \mathcal E$ if $a_{ij}>0$ and zero otherwise. It is assumed that $a_{ii}=0$ for all $i\in [n]$. Let $\deg_i=\sum\limits_{j=1}^{n}a_{ij}$ denotes the weighted degree of vertex $i$. The degree matrix of graph $\mathcal G$ is $\Deg=\diag([\deg_1, \cdots, \deg_n])$. The Laplacian matrix is $L=(L_{ij})=\Deg-\mathcal{A}$. Additionally, we denote $K_n={\bm I}_n-\frac{1}{n}{\bm 1}_n{\bm 1}^{\top}_n$, $\bsL=L\otimes {\bm I}_p$, $\bsK=K_n\otimes {\bm I}_p$, $\bsH=\frac{1}{n}({\bm 1}_n{\bm 1}_n^\top\otimes{\bm I}_p)$. Moreover, from Lemmas~1 and 2 in \cite{Yi2018distributed}, we know there exists an orthogonal matrix $[r \ R]\in \mathbb{R}^{n \times n}$ with $r=\frac{1}{\sqrt{n}}\mathbf{1}_n$ and $R \in \mathbb{R}^{n\times (n-1)}$ such that $R\Lambda_1^{-1}R^{\top}L=LR\Lambda_1^{-1}R^{\top}=K_n$, and $\frac{1}{\rho(L)}K_n\leq R\Lambda_1^{-1}R^{\top}\le\frac{1}{\rho_2(L)}K_n$, where $\Lambda_1=\diag([\lambda_2,\dots,\lambda_n])$ with $0<\lambda_2\leq\dots\leq\lambda_n$ being the eigenvalues of the Laplacian matrix $L$.

\subsection{Smooth Function}
\begin{definition}
A function $f(x):~\mathbb{R}^p\mapsto\mathbb{R}$ is smooth with constant $L_f>0$ if it is differentiable and
\begin{align}\label{nonconvex:smooth}
\|\nabla f(x)-\nabla f(y)\|\le L_{f}\|x-y\|,~\forall x,y\in \mathbb{R}^p.
\end{align}
\end{definition}

\subsection{Gradient Approximation}

Denote a random subset of the coordinates $\mathcal{S} \subseteq \{1, 2, \dots, p\}$ where the cardinality of $\mathcal{S}$ is $|\mathcal{S}| = n_{c}$. 
We provide two options of gradient approximation, denoted $g^e_{i}$ and defined by~\eqref{dbco:gradient:model2-st} and~\eqref{dbco:gradient:model2-st2}.

\begin{align}
&g^e_{i}=\frac{p}{n_{c}}\sum_{i\in \mathcal{S}}\frac{(F(x+\delta_{i} e_{i},\xi)-F(x,\xi))}{\delta_{i}}e_{i}
\label{dbco:gradient:model2-st}
\end{align}

\begin{align}
&g^e_{i}=\frac{p}{n_{c}}\sum_{i\in \mathcal{S}}\frac{(F(x+\delta_{i} e_{i},\xi)-F(x-\delta_{i} e_{i},\xi))}{2\delta_{i}}e_{i}
\label{dbco:gradient:model2-st2}
\end{align}

The coordinates are sampled uniformly, i.e. $\text{Pr}(i\in \mathcal{S}) = n_{c}/p$, which guarantees that both~\eqref{dbco:gradient:model2-st} and~\eqref{dbco:gradient:model2-st2} are \textit{unbiased} estimators of the \textit{full} coordinate gradient estimator $\sum_{i=1}^{d}\frac{(F(x+\delta_{i} e_{i},\xi)-F(x-\delta_{i} e_{i},\xi))}{2\delta_{i}}e_{i}$\cite{sharma2020zeroth}.

\subsection{Assumptions}

\begin{assumption}\label{zerosg:ass:graph}
The undirected graph $\mathcal G$ is connected.
\end{assumption}

\begin{assumption}\label{zerosg:ass:optset}
The optimal set $\mathbb{X}^*$ is nonempty and the optimal value $f^*>-\infty$.
\end{assumption}

\begin{assumption}\label{zerosg:ass:zeroth-smooth}
For almost all $\xi_i$, the stochastic ZO oracle $F_i(\cdot,\xi_i)$ is smooth with constant $L_f>0$.
\end{assumption}

\begin{assumption}\label{zerosg:ass:zeroth-variance}
The stochastic gradient $\nabla_xF_i(x,\xi_i)$ has bounded variance for any $j$th coordinate of $x$, i.e., there exists $\zeta\in\mathbb{R}$ such that $\mathbb{E}_{\xi_i}[(\nabla_xF_i(x,\xi_i)-\nabla f_i(x))_{j}^2]\le\zeta^2,~\forall i\in[n],~\forall j\in[p],~\forall x\in\mathbb{R}^p$. It also implies that $\mathbb{E}_{\xi_i}[\|\nabla_xF_i(x,\xi_i)-\nabla f_i(x)\|^2]\le\sigma^2_1\triangleq p \zeta^2,~\forall i\in[n],~\forall x\in\mathbb{R}^p$.
\end{assumption}

\begin{assumption}\label{zerosg:ass:fig}
Local cost functions are similar, i.e.,
there exists $\sigma_2\in\mathbb{R}$ such that $\|\nabla f_i(x)-\nabla f(x)\|^2\le\sigma^2_2,~\forall i\in[n],~\forall x\in\mathbb{R}^p$.
\end{assumption}

\begin{remark}
There is no assumption made on convexity. Assumption~\ref{zerosg:ass:graph} and \ref{zerosg:ass:optset} are basic and common in optimization literature.
Assumptions~\ref{zerosg:ass:zeroth-smooth} and \ref{zerosg:ass:zeroth-variance} are standard for solving ZO stochastic optimization problems. 
Assumption~\ref{zerosg:ass:fig} is slightly weaker than stating each $\nabla f_i$ is bounded, which is commonly used in finite-sum type ZO optimization literature.
\end{remark}

\section{Proposed ZODIAC Algorithm}\label{sec:alg}

\subsection{Algorithm Description}

In order to handle stochastic optimization problems, we propose the ZODIAC algorithm, where we consider the novel distributed primal-dual scheme~\cite{yi2020linear} with the stochastic coordinate estimators~\eqref{dbco:gradient:model2-st} and~\eqref{dbco:gradient:model2-st2}, summarized in Algorithm~\ref{zerosc:algorithm-random-pd}.

\begin{subequations}\label{zerosg:alg:random-pd}
\begin{align}
x_{i,k+1} &= x_{i,k}-\eta\Big(\alpha\sum_{j=1}^nL_{ij}x_{j,k}+\beta v_{i,k}+g^e_{i,k}\Big), \label{zerosg:alg:random-pd-x}\\
v_{i,k+1} &=v_{i,k}+ \eta\beta\sum_{j=1}^n L_{ij}x_{j,k},\notag\\
&\quad\forall x_{i,0}\in\mathbb{R}^p, ~\sum_{j=1}^nv_{j,0}={\bm 0}_p,~
\forall i\in[n].  \label{zerosg:alg:random-pd-q}
\end{align}
\end{subequations}

\begin{algorithm}[th!]
\caption{ZODIAC}
\label{zerosc:algorithm-random-pd}
\begin{algorithmic}[1]
\STATE \textbf{Input}: positive number $\alpha$, $\beta$, $\eta$, and positive sequences $\{\delta_{i,k}\}$.
\STATE \textbf{Initialize}: $ x_{i,0}\in\mathbb{R}^p$ and $v_{i,0}={\bm 0}_p,~
\forall i\in[n]$.
\FOR{$k=0,1,\dots$}
\FOR{$i=1,\dots,n$  in parallel}
\STATE  Broadcast $x_{i,k}$ to $\mathcal{N}_i$ and receive $x_{j,k}$ from $j\in\mathcal{N}_i$;
\STATE Select coordinates independently and uniformly;
\STATE Select $\xi_{i,k}$ independently;\\
\STATE \textbf{Option 1:} sample $F_i(x_{i,k}+\delta_{i,k}e_{i,k},\xi_{i,k})$, and $F_i(x_{i,k},\xi_{i,k})$;
\STATE  Update $x_{i,k+1}$ by \eqref{zerosg:alg:random-pd-x} with~\eqref{dbco:gradient:model2-st};
\STATE \textbf{Option 2:} sample  $F_i(x_{i,k}+\delta_{i,k}e_{i,k},\xi_{i,k})$ and $F_i(x_{i,k}-\delta_{i,k}e_{i,k},\xi_{i,k})$;
\STATE  Update $x_{i,k+1}$ by \eqref{zerosg:alg:random-pd-x} with~\eqref{dbco:gradient:model2-st2};
\STATE  Update $v_{i,k+1}$ by \eqref{zerosg:alg:random-pd-q}.
\ENDFOR
\ENDFOR
\STATE  \textbf{Output}: $\{\bsx_{k}\}$.
\end{algorithmic}
\end{algorithm}

Algorithm~\eqref{zerosc:algorithm-random-pd} can be written in compact form as 
\begin{subequations}\label{zerosg:alg:random-pd-compact}
\begin{align}
\bm{x}_{k+1}&=\bm{x}_k-\eta(\alpha\bsL\bm{x}_k+\beta\bm{v}_k+\bsg^e_k),\label{zerosg:alg:random-pd-compact-x}\\
\bm{v}_{k+1}&=\bm{v}_k+\eta\beta\bsL\bm{x}_k,~\forall \bsx_0\in\mathbb{R}^{np},~\sum_{i=1}^{n}v_{i,0}={\bm 0}_p.\label{zerosg:alg:random-pd-compact-v}
\end{align}
\end{subequations}

\subsection{Convergence Analysis}

\begin{theorem}\label{zerosg:thm-sg-smT}
Suppose Assumptions~\ref{zerosg:ass:graph}--\ref{zerosg:ass:fig} hold. For any given $T>n^3/p$, let $\{\bsx_k,k=0,\dots,T\}$ be the output generated by Algorithm~\ref{zerosc:algorithm-random-pd} with
\begin{align}\label{zerosg:step:eta2-sm}
&\alpha=\kappa_1\beta,~\beta=\frac{\kappa_2\sqrt{pT}}{\sqrt{n}},~ \eta=\frac{\kappa_2}{\beta},\nonumber\\
&\delta_{i,k}\le\frac{\kappa_\delta}{p^{\frac{1}{4}}n^{\frac{1}{4}}(k+1)^{\frac{1}{4}}},~\forall k\le T,
\end{align}
where $\kappa_1>\frac{1}{\rho_2(L)}+1$, $\kappa_2\in\Big(0,\min\{\frac{(\kappa_1-1)\rho_2(L)-1}{\rho(L)+(2\kappa_1^2+1)\rho(L^2)+1},\frac{1}{5}\}\Big)$,  and $\kappa_\delta>0$, then we have,
\begin{subequations}
\begin{align}
&\frac{1}{T}\sum_{k=0}^{T-1}\mathbb{E}[\|\nabla f(\bar{x}_k)\|^2]
=\mathcal{O}(\frac{\sqrt{p}}{\sqrt{	T}})
+\mathcal{O}(\frac{n}{T}),\label{zerosg:coro-sg-sm-equ3}\\
&\mathbb{E}[f(\bar{x}_{T})]-f^*=\mathcal{O}(1),\label{zerosg:coro-sg-sm-equ4}\\
&\frac{1}{T}\sum_{k=0}^{T-1}\mathbb{E}\Big[\frac{1}{n}\sum_{i=1}^{n}\|x_{i,k}-\bar{x}_k\|^2\Big]
=\mathcal{O}(\frac{n}{T}).\label{zerosg:coro-sg-sm-equ3.1}
\end{align}
\end{subequations}
\end{theorem}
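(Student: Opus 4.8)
The plan is to construct a single Lyapunov (potential) function that couples the optimality gap $f(\bar x_k)-f^*$, the consensus error $\|\bm{x}_k-\bar{\bm{x}}_k\|^2$, and a weighted dual-variable energy, and to show it satisfies a one-step drift inequality of the form $\mathbb{E}[W_{k+1}]\le\mathbb{E}[W_k]-\tfrac{\eta}{2}\mathbb{E}[\|\nabla f(\bar x_k)\|^2]-c\,\mathbb{E}[\|\bm{x}_k-\bar{\bm{x}}_k\|^2]+(\text{noise})+(\text{bias})$, after which I would telescope from $k=0$ to $T-1$ and divide by $\eta T$. The first simplification I would exploit is that $(\bm{1}_n^\top\otimes\bm{I}_p)\bsL=\bm{0}$ together with $\sum_i v_{i,0}=\bm{0}$, so the averaged iterate obeys $\bar x_{k+1}=\bar x_k-\tfrac{\eta}{n}\sum_{i=1}^n g^e_{i,k}$: the Laplacian and dual terms vanish under averaging and the mean dynamics reduce to a plain zeroth-order stochastic gradient step. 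Smoothness of $f$, which holds because each $f_i=\mathbb{E}_{\xi_i}[F_i]$ inherits the $L_f$-smoothness of $F_i$ from Assumption~\ref{zerosg:ass:zeroth-smooth} and smoothness is preserved under averaging, then yields the descent inequality on $f(\bar x_k)$.

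Next I would pin down the statistics of the coordinate estimator. Conditioning on the iterate and averaging over the uniform coordinate set $\mathcal{S}$ and over $\xi_{i,k}$, the central-difference estimator~\eqref{dbco:gradient:model2-st2} is unbiased for the full coordinate difference, so $\mathbb{E}_k[g^e_{i,k}]=\nabla f_i(x_{i,k})+b_{i,k}$ where the smoothing bias satisfies $\|b_{i,k}\|^2=\mathcal{O}(p\,L_f^2\,\delta_{i,k}^4)$; simultaneously I would bound the second moment $\mathbb{E}_k[\|g^e_{i,k}\|^2]$ by a multiple of $\|\nabla f_i(x_{i,k})\|^2+\sigma_1^2$ plus a $\delta$-dependent remainder, using Assumption~\ref{zerosg:ass:zeroth-variance}. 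Writing $\sum_i\nabla f_i(x_{i,k})=n\nabla f(\bar x_k)+\sum_i(\nabla f_i(x_{i,k})-\nabla f_i(\bar x_k))$ and controlling the last sum by $L_f\sum_i\|x_{i,k}-\bar x_k\|$ is what couples the descent term to the consensus error, while Assumption~\ref{zerosg:ass:fig} bounds the gradient dissimilarity that enters the variance term.

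The core of the argument is the primal--dual consensus part. Projecting onto the disagreement subspace via the orthogonal factor $R$ and the identity $R\Lambda_1^{-1}R^\top L=K_n$ from Section~\ref{zerosg:sec-preliminary}, I would rewrite the recursion for $(\bm{x}_k-\bar{\bm{x}}_k,\ \bm{v}_k)$ in transformed coordinates and form a quadratic Lyapunov term that combines $\|\bm{x}_k-\bar{\bm{x}}_k\|^2$, a weighted dual-variable energy, and a cross term. The choice $\alpha=\kappa_1\beta$ with $\kappa_1>\tfrac{1}{\rho_2(L)}+1$ makes the Laplacian feedback dominant, and the admissible range of $\kappa_2$ is precisely what forces the associated drift matrix to be negative definite, giving geometric contraction of the consensus error up to the injected gradient and noise. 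This produces a recursion $\mathbb{E}[\|\bm{x}_{k+1}-\bar{\bm{x}}_{k+1}\|^2]\le(1-c\eta)\,\mathbb{E}[\|\bm{x}_k-\bar{\bm{x}}_k\|^2]+\mathcal{O}(\eta^2)(\ldots)$ that I would add, with a suitable weight, to the descent inequality to cancel the consensus cross-terms.

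Finally I would telescope the combined potential and substitute $\beta=\kappa_2\sqrt{pT}/\sqrt{n}$, $\eta=\kappa_2/\beta=\sqrt{n}/\sqrt{pT}$, and $\delta_{i,k}\le\kappa_\delta\,p^{-1/4}n^{-1/4}(k+1)^{-1/4}$. Since $\delta_{i,k}^4=\mathcal{O}\big(1/(pn(k+1))\big)$, the accumulated bias $\sum_{k=0}^{T-1}\delta_{i,k}^4$ is $\mathcal{O}(\log T/(pn))$ and is dominated by the leading terms; the per-step variance $\mathcal{O}(\eta^2 p/n)$ sums to $\mathcal{O}(1)$ over $T$ steps, which keeps the potential bounded and immediately yields $\mathbb{E}[f(\bar x_T)]-f^*=\mathcal{O}(1)$ for~\eqref{zerosg:coro-sg-sm-equ4}. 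Dividing the telescoped bound by $\eta T/2$ gives the stochastic rate $\mathcal{O}(\sqrt{p}/\sqrt{T})$ plus the network residual $\mathcal{O}(n/T)$ of~\eqref{zerosg:coro-sg-sm-equ3}, the latter subsumed under $T>n^3/p$; reading off the weighted consensus term from the same inequality delivers~\eqref{zerosg:coro-sg-sm-equ3.1}. I expect the main obstacle to be the consensus step: certifying that the stated $(\kappa_1,\kappa_2)$ ranges render the primal--dual drift matrix negative definite in the transformed coordinates, while simultaneously keeping the $\mathcal{O}(\eta^2)$ noise amplification small enough not to destroy the contraction.
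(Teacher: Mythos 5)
Your plan follows essentially the same route as the paper's own proof. The paper (via Lemma~\ref{zerosg:lemma:sg2-T} and its proof in the appendix) builds exactly the potential you describe,
\begin{align*}
W_k=\tfrac{1}{2}\|\bsx_k\|^2_{\bsK}+\tfrac{1}{2}\Big\|\bsv_k+\tfrac{1}{\beta}\bsg_k^0\Big\|^2_{\bsQ+\kappa_1\bsK}
+\bsx_k^\top\bsK\Big(\bsv_k+\tfrac{1}{\beta}\bsg_k^0\Big)+n(f(\bar{x}_k)-f^*),
\end{align*}
uses the same transformed-coordinate identity $R\Lambda_1^{-1}R^\top L=K_n$ and the same $(\kappa_1,\kappa_2)$ ranges to make the drift negative, obtains a one-step inequality with decrements $-\kappa_4\|\bsx_k\|^2_{\bsK}$ and $-\tfrac{1}{8}\eta\|\bar{\bsg}^0_k\|^2$ plus $\mathcal{O}(np)\eta^2+\mathcal{O}(np^2)\eta\delta_k^2$ noise/bias terms, and then telescopes with the substitution $\eta=\sqrt{n}/\sqrt{pT}$, just as you propose. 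Your averaged-dynamics observation ($\bar{v}_k\equiv\bm{0}$ so $\bar{x}_{k+1}=\bar{x}_k-\eta\bar{g}^e_k$) is the same mechanism the paper exploits for the $W_{4,k}=n(f(\bar{x}_k)-f^*)$ component.

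One step of your plan is wrong as stated, although it is repairable. You claim the squared smoothing bias of the central-difference estimator~\eqref{dbco:gradient:model2-st2} is $\mathcal{O}(p\,L_f^2\,\delta_{i,k}^4)$. Under Assumption~\ref{zerosg:ass:zeroth-smooth} (only $L_f$-smoothness of $F_i(\cdot,\xi_i)$, no Lipschitz Hessian), a Taylor expansion gives a per-coordinate bias of at most $L_f\delta/2$, hence squared bias $\mathcal{O}(p\,L_f^2\,\delta^2)$; the $\delta^4$ rate requires second-order smoothness, which is not assumed. Moreover, the theorem also covers Option~1 (forward differences,~\eqref{dbco:gradient:model2-st}), for which only the $\delta^2$ rate holds in any case. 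Consequently your accumulated-bias estimate $\mathcal{O}(\log T/(pn))$ should be $\sum_{k}\delta_k^2=\mathcal{O}(\sqrt{T}/\sqrt{pn})$. Fortunately, the prescribed schedule $\delta_{i,k}\le\kappa_\delta p^{-1/4}n^{-1/4}(k+1)^{-1/4}$ was designed exactly for this weaker bound: the per-step bias contribution $\mathcal{O}(np)\eta\delta_k^2$, telescoped and divided by $n\eta T$, is still $\mathcal{O}(\sqrt{p}/\sqrt{nT})$, inside the target $\mathcal{O}(\sqrt{p}/\sqrt{T})$. This is precisely how the paper proceeds, through bounds of the form $\|\bsg^s_k-\bsg^0_k\|^2\le 2L_f^2\|\bsx_k\|^2_{\bsK}+\tfrac{np}{2}L_f^2\delta_k^2$. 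So replace your $\delta^4$ claim with the $\delta^2$ bound and redo that piece of bookkeeping; the remainder of your argument stands and coincides with the paper's.
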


Before proving Theorem~\ref{zerosg:thm-sg-smT}, we introduce the following lemmas.

\begin{lemma}\label{zerosg:lemma:variance}
Consider $f(x) = \mathbb{E}_{\xi} [F(x, \xi)]$, we have the following relationship,
\begin{align}
&\mathbb{E} \Big[ \| g^e_{i} \|  ^2\Big] \nonumber \\
&\leq 2(p-1)\left\| \nabla f (x) \right\|^2 + 2p \sigma^2_1 + \frac{3p^{2}}{n_{c}} \left( \zeta^2 + \frac{L_{f}^2 \delta_{k}^2}{2} \right) \nonumber \\
&\quad+ \frac{p^{2}L_{f}^2 \delta_{k}^2}{2} \label{eq_bd_var_CGE}
\end{align}
\end{lemma}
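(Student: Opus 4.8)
The plan is to bound the second moment of the coordinate estimator $g^e_i$ by disentangling its three sources of error: the randomness in the coordinate subset $\mathcal{S}$, the stochastic sampling noise in $\xi$, and the deterministic bias introduced by the finite smoothing parameter $\delta_k$. I would work with the central-difference form~\eqref{dbco:gradient:model2-st2} (the forward-difference case~\eqref{dbco:gradient:model2-st} is entirely analogous), writing each summand as $d_j := \frac{F(x+\delta_k e_j,\xi)-F(x-\delta_k e_j,\xi)}{2\delta_k}$, so that $g^e_i = \frac{p}{n_c}\sum_{j\in\mathcal{S}} d_j e_j$.

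First I would condition on $\xi$ and take the expectation over $\mathcal{S}$. Since the $\{e_j\}$ are orthonormal, $\|g^e_i\|^2 = \frac{p^2}{n_c^2}\sum_{j\in\mathcal{S}} d_j^2$ with no surviving cross terms, and uniform sampling $\Pr(j\in\mathcal{S})=n_c/p$ collapses the random subset sum into a reweighted full sum over all $p$ coordinates. This is the step that produces the coordinate-sampling variance factor; by separating the full-coordinate estimator $\sum_j d_j e_j$ from the sampling fluctuation, it also explains why part of the bound carries the benign $\mathcal{O}(p)$ scaling while the genuinely sampling-induced part inherits the $\mathcal{O}(p^2/n_c)$ factor appearing in~\eqref{eq_bd_var_CGE}.

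Next, for each fixed coordinate I would control $d_j$ through Assumption~\ref{zerosg:ass:zeroth-smooth}: Taylor's theorem with an $L_f$-Lipschitz gradient yields $d_j = \nabla_j F(x,\xi) + b_j$ with the finite-difference bias bounded as $|b_j|\le L_f\delta_k/2$, which is the origin of every $L_f^2\delta_k^2$ term. I would then write $\nabla_j F(x,\xi) = \nabla_j f(x) + s_j$, where $s_j$ is mean-zero with $\mathbb{E}_\xi[s_j^2]\le \zeta^2$ by Assumption~\ref{zerosg:ass:zeroth-variance}. Expanding $d_j^2$ and bounding the cross terms by Young's inequality separates the squared true partial derivative, the coordinate noise variance, and the squared bias; the gradient--noise cross term vanishes under $\mathbb{E}_\xi$ because $\mathbb{E}_\xi[s_j]=0$. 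Summing over $j$ and using $\sum_j(\nabla_j f(x))^2 = \|\nabla f(x)\|^2$ together with $\sum_j \mathbb{E}_\xi[s_j^2]\le p\zeta^2 = \sigma_1^2$ then assembles the $\|\nabla f(x)\|^2$, $\sigma_1^2$, and $\delta_k^2$ contributions.

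The main obstacle I anticipate is the bookkeeping in the final reassembly: I must combine the coordinate-sampling factor from the first step with the per-coordinate bias/variance decomposition so that the deterministic gradient term retains the $\mathcal{O}(p)$ coefficient $2(p-1)$, while the stochastic and smoothing terms inherit the $\mathcal{O}(p^2/n_c)$ amplification, all the while dominating the gradient--bias and noise--bias cross terms by Young's inequality without losing the correct powers of $p$ and $\delta_k$. Getting every constant to land exactly as stated, rather than merely up to order, is the delicate part; the conceptual steps themselves are standard.
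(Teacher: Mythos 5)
Your route is genuinely different from the paper's: the paper does not prove Lemma~\ref{zerosg:lemma:variance} from first principles at all, but simply invokes Proposition~III.2 of \cite{sharma2020zeroth} (a bound on the mean-squared error $\mathbb{E}[\|g^e_i-\nabla f(x)\|^2]$) and then asserts that \eqref{eq_bd_var_CGE} follows ``by simplifying.'' What you propose is essentially a self-contained re-derivation of that cited proposition: the orthonormal expansion of $g^e_i$, the uniform-sampling reweighting, the per-coordinate split $d_j=\nabla_j f(x)+s_j+b_j$ with $\mathbb{E}_\xi[s_j]=0$, $\mathbb{E}_\xi[s_j^2]\le\zeta^2$, $|b_j|\le L_f\delta_k/2$, and Young's inequality on the cross terms. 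These ingredients are correct and do yield a bound of the right order, namely $\mathcal{O}(p/n_c)\|\nabla f(x)\|^2+\mathcal{O}(p^2/n_c)\big(\zeta^2+L_f^2\delta_k^2\big)$.

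The genuine gap sits exactly where you flag ``delicacy,'' and it is not mere bookkeeping. Your sampling step gives the identity $\mathbb{E}_{\mathcal{S}}\big[\|g^e_i\|^2\big]=\tfrac{p}{n_c}\sum_{j=1}^p d_j^2$, so \emph{every} term---gradient, noise, and bias alike---inherits the same $p/n_c$ weight; splitting into $\|\mathbb{E}_{\mathcal{S}}[g^e_i]\|^2+\mathrm{Var}_{\mathcal{S}}$ does not change this total, so this decomposition cannot, as you claim, explain why the gradient term in \eqref{eq_bd_var_CGE} carries the $n_c$-free coefficient $2(p-1)$ while only the noise/bias terms carry $p^2/n_c$. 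Concretely, after Young's inequality on the gradient--bias cross term your gradient coefficient is at least $(1+\epsilon)p/n_c$, which for $n_c=1$ exceeds $2(p-1)$ unless $\epsilon\le 1-2/p$: this is repairable for $p\ge 3$ (take $\epsilon=1-2/p$, so $1/\epsilon\le 3$ and the inflated bias terms still fit inside $\tfrac{3p^2}{2n_c}L_f^2\delta_k^2+\tfrac{p^2}{2}L_f^2\delta_k^2$), but impossible for $p\le 2$; indeed for $p=2$, $n_c=1$, $\zeta=0$, a worst-case bias of magnitude $L_f\delta_k/2$ aligned with each $\nabla_j f$ makes $\mathbb{E}[\|g^e_i\|^2]=2\|\nabla f(x)\|^2+2L_f\delta_k(|\nabla_1 f|+|\nabla_2 f|)+L_f^2\delta_k^2$, which violates \eqref{eq_bd_var_CGE} whenever $\|\nabla f(x)\|$ is large relative to $L_f\delta_k$, so no argument could land the stated constants there. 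For comparison, in the paper the coefficient $2(p-1)$ arrives pre-packaged in the cited MSE bound (as $2p\|\nabla f\|^2-2\|\nabla f\|^2$), and the unproved step is the MSE-to-second-moment conversion---which by Young would actually cost $4p-2$ on the gradient and whose $2p\sigma_1^2$ term is never accounted for. So your plan is viable up to constants, and for $p\ge 3$ can even be tuned to the stated constants, but as outlined it neither delivers \eqref{eq_bd_var_CGE} verbatim nor correctly identifies the mechanism behind its coefficient structure.
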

where $\delta_{k} = \max\{ \delta_{i} \}, i \in [p]$.
\begin{proof}
Apply the proposition III.2 in \cite{sharma2020zeroth} and consider the coordinates are picked uniformly, then we have
\begin{align}
&\mathbb{E} \Big[ \| g^e_{i} - \nabla f (x) \|  ^2\Big] \nonumber \\
&\leq \sum_{i=1}^p p \Big[ 2 \left( \nabla f (x) \right)_i^2 + \frac{3}{n_{c}} \left( \zeta^2 + \frac{L^2 \delta_{i}^2}{2} \right) + \frac{L^2 \delta_{i}^2}{2} \Big] \nonumber \\
&\quad- 2 \left\| \nabla f (x) \right\|^2 
\end{align}
We can easily get Eq.~\eqref{eq_bd_var_CGE} by simpliying the above inequality.
\end{proof}

\begin{lemma}\label{zerosg:lemma:grad-st}
Suppose Assumptions~\ref{zerosg:ass:zeroth-smooth}--\ref{zerosg:ass:fig} hold. Let $\{\bsx_k\}$ be the sequence generated by Algorithm~\ref{zerosc:algorithm-random-pd}, $\bsg^e_k=\col(g^e_{1,k},\dots,g^e_{n,k})$, $\bsg^0_k=n\nabla{f}(\bar{\bsx}_k)$, $\bar{\bsg}_k^0=\bsH\bsg^0_{k}={\bm 1}_n\otimes\nabla f(\bar{x}_k)$, then
\begin{subequations}
\begin{align}
\mathbb{E}\Big[\|\bsg^e_k\|^2\Big]
&\le  6(p-1)\|\bar{\bsg}_{k}^0\|^2+6(p-1)L_f^2\|\bsx_{k}\|^2_{\bsK}\nonumber\\
&\quad+6n(p-1)\sigma^2_2 + \frac{3np^{2}}{n_{c}} \left( \zeta^2 + \frac{L_{f}^2 \delta_{k}^2}{2} \right)\nonumber \\
&\quad+2np \sigma^2_1+ \frac{np^{2}L_{f}^2 \delta_{k}^2}{2}\label{zerosg:rand-grad-esti2}\\
\|\bsg^0_{k+1}\|^2&\le 3(\eta^2L_f^2\|\bsg^e_{k}\|^2+n\sigma^2_2
+\|\bar{\bsg}_{k}^0\|^2).\label{zerosg:rand-grad-esti4}
\end{align}
\end{subequations}
\end{lemma}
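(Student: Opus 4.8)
The plan is to establish the two estimates separately; in both cases everything reduces to a per-agent bound combined with the three-term split $\|a+b+c\|^2\le 3(\|a\|^2+\|b\|^2+\|c\|^2)$, Lipschitz smoothness (Assumption~\ref{zerosg:ass:zeroth-smooth}), and the gradient-similarity bound (Assumption~\ref{zerosg:ass:fig}). For \eqref{zerosg:rand-grad-esti2} I would first write $\|\bsg^e_k\|^2=\sum_{i=1}^n\|g^e_{i,k}\|^2$ and apply Lemma~\ref{zerosg:lemma:variance} to each agent, conditioning on the history $\mathcal{F}_k$ up to iteration $k$; since $x_{i,k}$ is $\mathcal{F}_k$-measurable and $\sigma_1^2=p\zeta^2$ is common to all agents by Assumption~\ref{zerosg:ass:zeroth-variance}, this gives
\begin{align*}
\mathbb{E}[\|g^e_{i,k}\|^2\mid\mathcal{F}_k]&\le 2(p-1)\|\nabla f_i(x_{i,k})\|^2+2p\sigma^2_1\\
&\quad+\frac{3p^{2}}{n_{c}}\Big(\zeta^2+\frac{L_f^2\delta_k^2}{2}\Big)+\frac{p^{2}L_f^2\delta_k^2}{2}.
\end{align*}
Summing over $i$ reproduces every constant term on the right of \eqref{zerosg:rand-grad-esti2}, so it only remains to control $\sum_{i=1}^n\|\nabla f_i(x_{i,k})\|^2$.

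The key reduction is the split $\nabla f_i(x_{i,k})=\big(\nabla f_i(x_{i,k})-\nabla f_i(\bar{x}_k)\big)+\big(\nabla f_i(\bar{x}_k)-\nabla f(\bar{x}_k)\big)+\nabla f(\bar{x}_k)$ followed by the three-term inequality. The first piece is bounded by $L_f^2\|x_{i,k}-\bar{x}_k\|^2$ through Assumption~\ref{zerosg:ass:zeroth-smooth}, the second by $\sigma_2^2$ through Assumption~\ref{zerosg:ass:fig}, and the third is constant in $i$. Summing and recalling $\sum_i\|x_{i,k}-\bar{x}_k\|^2=\|\bsx_k\|^2_{\bsK}$ and $\sum_i\|\nabla f(\bar{x}_k)\|^2=n\|\nabla f(\bar{x}_k)\|^2=\|\bar{\bsg}^0_k\|^2$ yields $\sum_i\|\nabla f_i(x_{i,k})\|^2\le 3L_f^2\|\bsx_k\|^2_{\bsK}+3n\sigma^2_2+3\|\bar{\bsg}^0_k\|^2$; multiplying by $2(p-1)$ matches the three leading terms of \eqref{zerosg:rand-grad-esti2} and closes that bound.

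For \eqref{zerosg:rand-grad-esti4} the first step is to extract the averaged dynamics. Left-multiplying the compact update \eqref{zerosg:alg:random-pd-compact-x} by $\frac1n({\bm 1}_n^\top\otimes{\bm I}_p)$ and using ${\bm 1}_n^\top L={\bm 0}$ (which kills the $\bsL\bsx_k$ term) together with the conservation law $\sum_{i=1}^n v_{i,k}={\bm 0}$ for all $k$ (proved by induction from $\sum_i v_{i,0}={\bm 0}$ and the column-sum-zero property of $L$, which kills the $\beta\bsv_k$ term), I obtain $\bar{x}_{k+1}=\bar{x}_k-\eta\,\bar{g}^e_k$ with $\bar{g}^e_k=\frac1n\sum_i g^e_{i,k}$. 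Then $\|\bsg^0_{k+1}\|^2=\sum_i\|\nabla f_i(\bar{x}_{k+1})\|^2$ is decomposed exactly as before, the displacement term now being bounded by $L_f^2\|\bar{x}_{k+1}-\bar{x}_k\|^2=\eta^2L_f^2\|\bar{g}^e_k\|^2$. Summing over $i$, applying Cauchy--Schwarz in the form $n\|\bar{g}^e_k\|^2=\frac1n\|\sum_i g^e_{i,k}\|^2\le\sum_i\|g^e_{i,k}\|^2=\|\bsg^e_k\|^2$, and collapsing the similarity and $\|\bar{\bsg}^0_k\|^2$ terms produces precisely \eqref{zerosg:rand-grad-esti4}.

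I expect the only genuinely algorithm-specific step to be the derivation of the averaged recursion $\bar{x}_{k+1}=\bar{x}_k-\eta\,\bar{g}^e_k$: both the Laplacian annihilation ${\bm 1}_n^\top L={\bm 0}$ and the dual-variable conservation $\sum_i v_{i,k}={\bm 0}$ must be invoked to eliminate the consensus and dual terms, and this is exactly what makes the averaged iterate behave like a plain stochastic gradient step so that smoothness can be applied. Everything else is a routine use of the three-term inequality, Lipschitz smoothness, and the similarity bound. One point worth stating carefully is that the expectation in \eqref{zerosg:rand-grad-esti2} is conditional on $\mathcal{F}_k$, with all $\mathcal{F}_k$-measurable quantities on the right treated as constants, whereas \eqref{zerosg:rand-grad-esti4} is a pathwise inequality valid for each realization.
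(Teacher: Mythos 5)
Your proof is correct and follows essentially the same route as the paper's (very terse) proof, which simply cites Lemma~\ref{zerosg:lemma:variance}, the Cauchy--Schwarz inequality, and Assumptions~\ref{zerosg:ass:zeroth-smooth} and~\ref{zerosg:ass:fig} --- exactly the ingredients you instantiate. Your write-up just makes explicit what the paper leaves implicit: the per-agent conditional application of Lemma~\ref{zerosg:lemma:variance}, the three-term decomposition through $\nabla f_i(\bar{x}_k)$ and $\nabla f(\bar{x}_k)$, and the averaged recursion $\bar{x}_{k+1}=\bar{x}_k-\eta\,\bar{g}^e_k$ obtained from ${\bm 1}_n^\top L={\bm 0}$ and the conservation $\sum_{i=1}^n v_{i,k}={\bm 0}_p$, which the paper's appendix also relies on when it invokes $\|\bsg^0_{k+1}-\bsg^0_k\|^2\le\eta^2L_f^2\|\bar{\bsg}^e_k\|^2$.
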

\begin{proof}
(i) Eq.~\eqref{zerosg:rand-grad-esti2} is due to Lemma~\ref{zerosg:lemma:variance}, Cauchy-Schwarz inequality and Assumption~\ref{zerosg:ass:fig}.

(ii) Eq.~\eqref{zerosg:rand-grad-esti4} is established by Cauchy-Schwarz inequality, Assumption~\ref{zerosg:ass:zeroth-smooth} and~\ref{zerosg:ass:fig}.
\end{proof}

\begin{lemma}\label{zerosg:lemma:sg2-T}
Suppose Assumptions~\ref{zerosg:ass:graph}--\ref{zerosg:ass:fig} hold, and we have fixed parameters $\alpha=\kappa_1\beta$, $\beta$, and $\eta=\frac{\kappa_2}{\beta}$, where $\beta$ is large enough,
$\kappa_1>\frac{1}{\rho_2(L)}+1$ and $\kappa_2\in\Big(0,\min\{\frac{(\kappa_1-1)\rho_2(L)-1}{\rho(L)+(2\kappa_1^2+1)\rho(L^2)+1}, \frac{1}{5}\}\Big)$ are constants. Let $\{\bsx_k\}$ be the sequence generated by Algorithm~\ref{zerosc:algorithm-random-pd}, then
\begin{subequations}
\begin{align}
\mathbb{E}[W_{k+1}]
&\le   W_{k}-\kappa_4\|\bsx_k\|^2_{\bsK}\nonumber\\
&\quad-\frac{1}{2}(\kappa_2 - 5\kappa_2^2)\Big\|\bm{v}_k+\frac{1}{\beta}\bsg_{k}^0\Big\|^2_{\bsK}\nonumber\\
&\quad-\frac{1}{8}\eta\|\bar{\bsg}^0_{k}\|^2+\mathcal{O}(np)\eta^2 + \mathcal{O}(np^2)\eta\delta_k^2,
\label{zerosg:sgproof-vkLya2T}\\
\mathbb{E}[W_{4,k+1}]
&\le  W_{4,k}+2\eta L_f^2\|\bsx_k\|^2_{\bsK}-\frac{1}{8}\eta\|\bar{\bsg}_{k}^0\|^2\nonumber\\
&\quad+\mathcal{O}(p)\eta^2
+\mathcal{O}(np)\eta\delta^2_k.\label{zerosg:v4kspeed}
\end{align}
\end{subequations}
\end{lemma}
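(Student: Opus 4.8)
The plan is to treat $W_k$ as a composite Lyapunov function assembled from three pieces---the function-suboptimality component $W_{4,k}$ (of the form $n(f(\bar{x}_k)-f^*)$), the primal consensus error $\|\bsx_k\|^2_{\bsK}$, and the shifted dual error $\|\bm{v}_k+\frac{1}{\beta}\bsg^0_k\|^2_{\bsK}$---and to prove a one-step descent for each piece, then add them with weights chosen so that the primal--dual cross terms cancel and the surviving coefficients are negative. Three facts are used throughout. First, summing \eqref{zerosg:alg:random-pd-compact-v} and using ${\bm 1}_n^\top L={\bm 0}_n^\top$ gives $\sum_i v_{i,k}={\bm 0}_p$ for all $k$, hence $\bsH\bm{v}_k={\bm 0}$ and $\bsK\bm{v}_k=\bm{v}_k$. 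Second, $K_nL=L$ yields $\bsK\bsL=\bsL$. Third, the graph-theory sandwich $\frac{1}{\rho(L)}\bsK\preceq R\Lambda_1^{-1}R^\top\otimes{\bm I}_p\preceq\frac{1}{\rho_2(L)}\bsK$ converts Laplacian quadratic forms into $\bsK$-weighted norms. All expectations below are conditional on the natural filtration $\mathcal{F}_k$.

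For the function-value inequality \eqref{zerosg:v4kspeed} I would start from $L_f$-smoothness of $f$ at $\bar{x}_{k+1}$ versus $\bar{x}_k$. Averaging \eqref{zerosg:alg:random-pd-compact-x} annihilates the Laplacian and dual terms (again by ${\bm 1}_n^\top L={\bm 0}_n^\top$ and $\sum_i v_{i,k}={\bm 0}_p$), leaving $\bar{x}_{k+1}-\bar{x}_k=-\frac{\eta}{n}({\bm 1}_n^\top\otimes{\bm I}_p)\bsg^e_k$. In conditional expectation each $g^e_{i,k}$ equals $\nabla f_i(x_{i,k})$ up to an $\mathcal{O}(\delta_{i,k})$ smoothing bias, so the linear term becomes $-\eta\langle\nabla f(\bar{x}_k),\sum_i\nabla f_i(x_{i,k})\rangle$ plus a bias term; writing $\nabla f_i(x_{i,k})=\nabla f_i(\bar{x}_k)+(\nabla f_i(x_{i,k})-\nabla f_i(\bar{x}_k))$, using $\sum_i\nabla f_i(\bar{x}_k)=n\nabla f(\bar{x}_k)$, bounding the remainder by $L_f\|x_{i,k}-\bar{x}_k\|$, and applying Young's inequality produces the $+2\eta L_f^2\|\bsx_k\|^2_{\bsK}$ contamination together with the leading $-\eta\|\bar{\bsg}^0_k\|^2$ descent (recall $\|\bar{\bsg}^0_k\|^2=n\|\nabla f(\bar{x}_k)\|^2$). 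The quadratic smoothness term $\frac{L_f}{2}\|\bar{x}_{k+1}-\bar{x}_k\|^2$ is handled by noting that, since the agents sample independently, the variance of $\frac{1}{n}\sum_i g^e_{i,k}$ carries the helpful $1/n$ factor; combining this with Lemma~\ref{zerosg:lemma:variance} gives an $\mathcal{O}(p)\eta^2$ noise term and a term of order $\eta^2\|\bar{\bsg}^0_k\|^2$ that is absorbed to reduce the descent coefficient to the stated $-\frac{1}{8}\eta$, while the bias collects into $\mathcal{O}(np)\eta\delta_k^2$.

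For the full Lyapunov inequality \eqref{zerosg:sgproof-vkLya2T} I would expand $\|\bsx_{k+1}\|^2_{\bsK}$ and $\|\bm{v}_{k+1}+\frac{1}{\beta}\bsg^0_{k+1}\|^2_{\bsK}$ from the compact updates. The primal expansion contributes the contracting term $-2\eta\alpha\langle\bsx_k,\bsL\bsx_k\rangle_{\bsK}$, a cross term in $\bsL\bm{v}_k$, and the quadratic $\eta^2\|\alpha\bsL\bsx_k+\beta\bm{v}_k+\bsg^e_k\|^2_{\bsK}$; the dual expansion contributes a matching $\bsL\bsx_k$ cross term plus the increment $\frac{1}{\beta}(\bsg^0_{k+1}-\bsg^0_k)$, which is controlled through \eqref{zerosg:rand-grad-esti4} and then bounded by \eqref{zerosg:rand-grad-esti2}. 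Adding the two quadratic-form descents to the $W_{4,k}$ descent with the weights implicit in $W_k$, taking $\mathbb{E}[\cdot\mid\mathcal{F}_k]$, and applying the spectral sandwich, I would collect the coefficients of $\|\bsx_k\|^2_{\bsK}$ and of $\|\bm{v}_k+\frac{1}{\beta}\bsg^0_k\|^2_{\bsK}$. The conditions $\kappa_1>\frac{1}{\rho_2(L)}+1$ and $\kappa_2<\frac{(\kappa_1-1)\rho_2(L)-1}{\rho(L)+(2\kappa_1^2+1)\rho(L^2)+1}$ make the former coefficient equal to some $-\kappa_4<0$, and $\kappa_2<\frac{1}{5}$ makes $\kappa_2-5\kappa_2^2>0$ so the latter equals $-\frac{1}{2}(\kappa_2-5\kappa_2^2)<0$; the residual stochastic and smoothing terms aggregate into $\mathcal{O}(np)\eta^2+\mathcal{O}(np^2)\eta\delta_k^2$ (the extra factor $p$ relative to the $W_{4,k}$ bound coming from using the full $\mathbb{E}\|\bsg^e_k\|^2$ rather than the variance-reduced average).

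The main obstacle is the third step's cross-term bookkeeping. Because the primal and dual variables are coupled only through $\bsL$, the off-diagonal terms $\pm 2\eta\beta\langle\bsx_k,\bsL\bm{v}_k\rangle_{\bsK}$ generated by the two expansions must cancel exactly once the relative weight of the dual block is fixed, and it is precisely this requirement, combined with the need to dominate the indefinite $\bsg^e_k$-contributions using $\eta\beta=\kappa_2$ (a constant, so the $\eta^2\beta^2$-scaled quadratics do not vanish and must be tracked), that pins down the admissible interval for $\kappa_2$. Turning the collected coefficient of $\|\bsx_k\|^2_{\bsK}$ into a genuinely negative constant $-\kappa_4$---rather than a sign-indefinite expression---requires simultaneously invoking $\bsK\bsL=\bsL$ and both sides of the spectral sandwich, and this is exactly where the lower bound on $\kappa_1$ and the denominator $\rho(L)+(2\kappa_1^2+1)\rho(L^2)+1$ in the constraint on $\kappa_2$ originate.
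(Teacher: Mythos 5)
Your treatment of \eqref{zerosg:v4kspeed} is essentially the paper's own: smoothness of $f$ along the averaged iterate, the fact that averaging \eqref{zerosg:alg:random-pd-compact-x} annihilates the Laplacian and dual terms so that $\bar{x}_{k+1}-\bar{x}_k=-\eta\bar{g}^e_k$, unbiasedness up to the smoothing bias, and Young's inequality; that half would go through. The genuine gap is in your Lyapunov function for \eqref{zerosg:sgproof-vkLya2T}. You assemble $W_k$ from three \emph{squared-norm} pieces only and assert that the primal--dual cross terms cancel once scalar weights are chosen. They cannot. Expanding $\|\bsx_{k+1}\|^2_{\bsK}$ produces the cross term $-2\eta\beta\,\bsx_k^\top\bsK\bm{v}_k$ (the dual enters the primal update \emph{undressed} by $\bsL$), whereas expanding $\big\|\bm{v}_{k+1}+\frac{1}{\beta}\bsg^0_{k+1}\big\|^2_{\bsK}$ produces $+2\eta\beta\,\bsx_k^\top\bsL\big(\bm{v}_k+\frac{1}{\beta}\bsg^0_k\big)$ (the primal enters the dual update \emph{through} $\bsL$). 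One cross term carries $\bsK$, the other carries $\bsL$; no scalar weights $w_1,w_2$ satisfy $w_1\bsK=w_2\bsL$ for a general connected graph, so the exact cancellation you rely on fails --- indeed you wrote both cross terms as $\pm 2\eta\beta\langle\bsx_k,\bsL\bm{v}_k\rangle_{\bsK}$, which misreads the primal expansion. The paper resolves precisely this mismatch by measuring the dual error in the $\bsQ+\kappa_1\bsK$ norm with $\bsQ=R\Lambda_1^{-1}R^\top\otimes{\bm I}_p$, since $\bsL\bsQ=\bsK$ converts the dual-side cross term into a $\bsK$-term matching the primal one, and by including in $W_k$ the explicit inner-product term $W_{3,k}=\bsx_k^\top\bsK\big(\bm{v}_k+\frac{1}{\beta}\bsg^0_k\big)$, whose expansion supplies $-\eta\alpha\,\bsx_k^\top\bsL\big(\bm{v}_k+\frac{1}{\beta}\bsg^0_k\big)$ to annihilate the leftover $\kappa_1\bsL$ cross term via $\alpha=\kappa_1\beta$.

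More fundamentally, even if you bounded rather than canceled the cross terms, your three-piece $W_k$ can never produce the negative term $-\frac{1}{2}(\kappa_2-5\kappa_2^2)\big\|\bm{v}_k+\frac{1}{\beta}\bsg^0_k\big\|^2_{\bsK}$ claimed in \eqref{zerosg:sgproof-vkLya2T}. The dual update $\bm{v}_{k+1}=\bm{v}_k+\eta\beta\bsL\bsx_k$ is a pure integrator: the expansion of \emph{any} squared norm of the shifted dual error returns its previous value plus cross terms and nonnegative quadratics, never a contraction in the dual error itself. In the paper, the dual descent comes solely from the cross term $W_{3,k}$: pairing the $-\eta\beta\bm{v}_k$ inside $\bsx_{k+1}$ with the $\bm{v}_k$ in the second factor yields the contraction, which appears in the appendix as $-\big\|\bm{v}_k+\frac{1}{\beta}\bsg_{k}^0\big\|^2_{\eta(\beta-\frac{1}{2}-\eta\beta^2)\bsK}$. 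Omitting $W_{3,k}$ (which, being sign-indefinite, also obliges the paper to verify $W_k\ge 0$, cf.\ \eqref{zerosg:vkLya3}) leaves the dual error with no negative coefficient at all, so the first inequality of the lemma cannot be established by your route. This is a missing idea, not a bookkeeping detail.
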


\begin{proof}
We provide the proof of Lemma~\ref{zerosg:lemma:sg2-T} in the appendix~\ref{proof:lemma1}.
\end{proof}

We are now ready to prove Theorem~\ref{zerosg:thm-sg-smT}.
\begin{proof}

Denote
\begin{align*}
\hat{V}_k=\|\bm{x}_k\|^2_{\bsK}+\Big\|\bsv_k
+\frac{1}{\beta_k}\bsg_k^0\Big\|^2_{\bsK}+n(f(\bar{x}_k)-f^*).
\end{align*}
We have
\begin{align}
&W_{k}\nonumber\\
&=\frac{1}{2}\|\bsx_{k}\|^2_{\bsK}
+\frac{1}{2}\Big\|\bsv_k+\frac{1}{\beta_k}\bsg_k^0\Big\|^2_{\bsQ+\kappa_1\bsK}\nonumber\\
&~~~+\bsx_k^\top\bsK\Big(\bm{v}_k+\frac{1}{\beta_k}\bsg_k^0\Big)+n(f(\bar{x}_k)-f^*)\nonumber\\
&\ge\frac{1}{2}\|\bsx_{k}\|^2_{\bsK}
+\frac{1}{2}\Big(\frac{1}{\rho(L)}+\kappa_1\Big)
\Big\|\bsv_k+\frac{1}{\beta_k}\bsg_k^0\Big\|^2_{\bsK}\nonumber\\
&~~~-\frac{1}{2\kappa_1}\|\bsx_{k}\|^2_{\bsK}
-\frac{1}{2}\kappa_1\Big\|\bsv_k+\frac{1}{\beta_k}\bsg_k^0\Big\|^2_{\bsK}
+n(f(\bar{x}_k)-f^*)\nonumber\\
&\ge\min\Big\{\frac{1}{2\rho(L)},~\frac{\kappa_1-1}{2\kappa_1}\Big\}\hat{V}_k\ge0,\label{zerosg:vkLya3}
\end{align}
Additionally, we can get $W_{k}\le(\frac{\kappa_1+1}{2}+\frac{1}{2\rho_2(L)})\hat{V}_k$.
	
Consider that $\beta=\kappa_2\sqrt{pT}/\sqrt{n}$ and $T> n^3/p$, we know that Lemma~\ref{zerosg:lemma:sg2-T} are satisfied. So \eqref{zerosg:sgproof-vkLya2T} and \eqref{zerosg:v4kspeed} hold.
Summing~\eqref{zerosg:sgproof-vkLya2T} over $k \in [0, T]$ and applying~\eqref{zerosg:vkLya3}, we have
\begin{align}
&\frac{1}{T+1}\sum_{k=0}^{T}\mathbb{E}[\frac{1}{n}\sum_{i=1}^{n}\|x_{i,k}-\bar{x}_k\|^2]\nonumber\\
&\le\frac{1}{\kappa_4}\Big(\frac{W_{0}}{n(T+1)}
+\frac{\mathcal{O}(n)\eta\delta_k^2}{T}
+\frac{\mathcal{O}(n/p)\eta^2\kappa_\delta}{\sqrt{T(T+1)}}\Big) \nonumber\\
&=\mathcal{O}(\frac{n}{T}),
\label{zerosg:thm-sg-sm-equ3.1p}
\end{align}
where $W_{0} = \mathcal{O}(n)$, $\frac{W_{0}}{n(T+1)}=\mathcal{O}(\frac{1}{T})$, $\frac{n\mathcal{O}(p^2)\eta\delta_k^2}{T} = \mathcal{O}(\frac{n}{T})$, and $\frac{\mathcal{O}(n/p)\eta^2\kappa_\delta}{\sqrt{T(T+1)}}=\mathcal{O}(\frac{n}{pT})$ , which gives \eqref{zerosg:coro-sg-sm-equ3.1}.

From~\eqref{zerosg:v4kspeed},~\eqref{zerosg:step:eta2-sm}, and~\eqref{zerosg:vkLya3}, summing~\eqref{zerosg:v4kspeed} over $k \in [0, T]$ similar to the way to get \eqref{zerosg:coro-sg-sm-equ3.1}, we have
\begin{align}\label{zerosg:thm-sg-sm-equ3p}
&\frac{1}{T+1}\sum_{k=0}^{T}\mathbb{E}[\|\nabla f(\bar{x}_k)\|^2]=\frac{1}{n(T+1)}\sum_{k=0}^{T}\mathbb{E}[\|\bar{\bsg}_{k}^0\|^2]\nonumber\\
&\le 8\Big(\frac{W_{4,0}}{n(T+1)\eta}
+\frac{2L_f^2}{n(T+1)}\sum_{k=0}^{T}\mathbb{E}[\|\bsx_k\|^2_{\bsK}]+\frac{\mathcal{O}(p)}{n}\nonumber\\
&~~~+\frac{\mathcal{O}(\sqrt{np})}{\sqrt{n(T+1)}}\Big).
\end{align}
Noting that $\eta=\kappa_2/\beta_k=\sqrt{n}/\sqrt{pT}$, and $n/T<\sqrt{p}/\sqrt{nT}$ due to $T> n^3/p$, from \eqref{zerosg:thm-sg-sm-equ3p} and \eqref{zerosg:thm-sg-sm-equ3.1p}, we have
\begin{align*}
\frac{1}{T}\sum_{k=0}^{T-1}\mathbb{E}[\|\nabla f(\bar{x}_k)\|^2]
&=\mathcal{O}(\frac{\sqrt{p}}{\sqrt{T}})
+\mathcal{O}(\frac{n}{T}),
\end{align*}
which gives \eqref{zerosg:coro-sg-sm-equ3}.

Summing \eqref{zerosg:v4kspeed} over $ k\in[0,T]$, and using \eqref{zerosg:step:eta2-sm}  yield
\begin{align}\label{zerosg:thm-sg-sm-equ4p}
&n(\mathbb{E}[f(\bar{x}_{T+1})]-f^*)=\mathbb{E}[W_{4,T+1}]\nonumber\\
&\le W_{4,0}+\frac{2\sqrt{n}}{\sqrt{pT}} L_f^2\sum_{k=0}^{T}\|\bsx_k\|^2_{\bsK}+n\mathcal{O}(p)\eta^2\frac{T+1}{T}\nonumber\\
&~~~+\mathcal{O}(np)\eta\delta^2_k\sqrt{\frac{T+1}{T}}.
\end{align}

Noting that $W_{4,0}=\mathcal{O}(n)$ and $\sqrt{n}n/\sqrt{pT}<1$ due to $T> n^3/p$, from \eqref{zerosg:thm-sg-sm-equ3.1p} and \eqref{zerosg:thm-sg-sm-equ4p}, we have $\mathbb{E}[f(\bar{x}_{T+1})]-f^*=\mathcal{O}(1)$, which gives~\eqref{zerosg:coro-sg-sm-equ4}.

\end{proof}

\section{Numerical Examples}\label{sec:simulations}
We consider a benchmark non-linear least square problem from the literature \cite{liu2019signsgd, liu2018zeroth}. 
The local cost function is given as $f_i(\mathbf x) = \left ( y_i - \phi(\mathbf x; \mathbf a_i) \right )^2$ for $i\in[n]$, where $\phi(\mathbf x; \mathbf a_i) = \frac{1}{1 + e^{-\mathbf a_i^{T} \mathbf x}}$, $\xi_i$ follows a standard normal distribution $\mathcal{N}_{i}(0, 0.01)$.
To prepare the synthetic dataset, we randomly draw samples $\mathbf a_i$ from $\mathcal{N}(\mathbf 0, \mathbf{I} )$ and set an optimal vector $\mathbf{x_{opt}} = \mathbf{1}$. The label is $y_i = 1$ if $\phi(\mathbf x_{opt}; \mathbf a_i) \geq 0.5$ and $0$ otherwise. The training set has $2000$ samples and the test set has $200$ samples. 
We set the dimension $d$ of $\mathbf a_i$ to $100$, the batch size is $1$, and the total iteration number is $50000$. As suggested in the work \cite{liu2019signsgd}, the smooth parameter $\delta = \frac{10}{\sqrt{Td}}$.
The communication topology of $10$ agents is generated randomly following the Erd\H{o}s - R\' enyi model with the connection probability of $0.4$. 

We compare the proposed ZODIAC algorithm with the two estimator options,~\eqref{dbco:gradient:model2-st} and~\eqref{dbco:gradient:model2-st2}, against the current state-of-the-art centralized and distributed ZO algorithms: ZO-SGD \cite{ghadimi2013stochastic}, ZO-SCD \cite{lian2016Comprehensive}, distributed ZO gradient tracking algorithm (ZO-GDA) \cite{tang2020distributedzero} and ZONE-M \cite{hajinezhad2019zone}. The hyper-parameters used in the experiments are well-tuned based on performance and provided in Table~\ref{tab:para-bc}. The test accuracy of each algorithm is summarized in Table~\ref{tab:acc}. 
From Fig.~\ref{fig:loss}, we can see that ZODIAC outperforms the existing algorithms and achieve better loss results. Additionally, both ZODIAC implementations have higher accuracy.
Moreover, we provide the error of the gradient estimation in ZODICA in Fig.~\ref{fig:grad_est}.

\begin{figure}[ht!]
  \centering
  \includegraphics[width=1\linewidth]{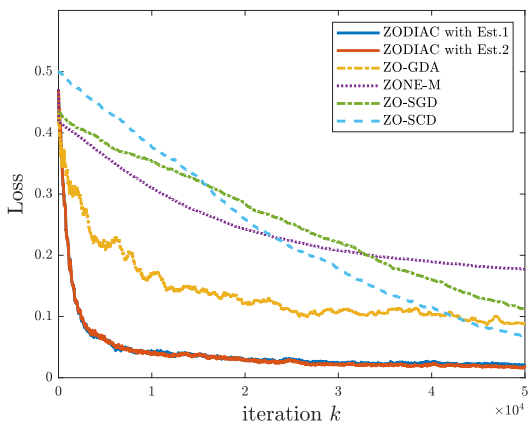}  
  \caption{Training loss evaluations.}
  \label{fig:loss}
\end{figure}

\begin{figure}[ht!]
  \centering
  \includegraphics[width=1\linewidth]{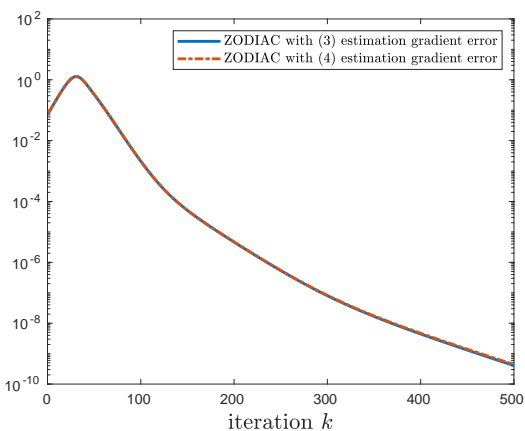}  
  \caption{The error of the gradient estimation.}
  \label{fig:grad_est}
\end{figure}

\begin{table}[ht!]
\caption{Parameters for Binary Classification}
\label{tab:para-bc}
\begin{center}\scalebox{1}{
\begin{tabular}{cc}
\multicolumn{1}{c}{Algorithm}  &\multicolumn{1}{c}{Parameters}
\\ \hline
ZODIAC \footnotemark  & $\eta = 0.08$, $\alpha = 4$, $\beta = 3$ \\
ZO-SGD  \cite{ghadimi2013stochastic}          & $\mu = 0.01$ \\
ZO-SCD  \cite{lian2016Comprehensive}          & $\mu = 0.01$ \\
ZO-GDA  \cite{tang2020distributedzero}          & $\eta = 0.08/{k^{10^{-5}}}$ \\
ZONE-M \cite{hajinezhad2019zone}        & $\rho = 0.1\sqrt{k}$ \\

\hline
\end{tabular}
}
\end{center}
\end{table}

\footnotetext{ZODIAC is tested under the same parameters for both estimators~\eqref{dbco:gradient:model2-st} and~\eqref{dbco:gradient:model2-st2}.}

\begin{table}[ht!]
\caption{Accuracy}
\label{tab:acc}
\begin{center}
\begin{tabular}{cc}
\multicolumn{1}{c}{Algorithm}  & Accuracy($\%$)
\\ \hline
ZODIAC with~\eqref{dbco:gradient:model2-st}     & \textbf{99.0}  \\
ZODIAC with~\eqref{dbco:gradient:model2-st2}       & \textbf{98.5}  \\
ZO-SGD  \cite{ghadimi2013stochastic}        & 85.5  \\
ZO-SCD  \cite{lian2016Comprehensive}          & 91.0 \\
ZO-GDA  \cite{tang2020distributedzero}       & 91.0  \\
ZONE-M \cite{hajinezhad2019zone}        & 89.5 \\
\hline
\end{tabular}
\end{center}
\end{table}

\section{Conclusions}\label{sec:conclusions}

In this paper, we investigated the stochastic distributed nonconvex optimization problem and proposed a stochastic coordinate method within a primal--dual scheme, ZODIAC.
We demonstrated that the proposed algorithm achieves the convergence rate of $\mathcal{O}(\sqrt{p}/\sqrt{T})$ for general nonconvex cost functions.
Additionally, we illustrated the efficacy and accuracy of ZODIAC through a benchmark example in comparison with the existing state-of-the-art centralized and distributed ZO algorithms.

\section*{Acknowledgments}
The authors would like to thank Dr. Xinlei Yi for his insightful inspirations and motivations on this work.

\bibliographystyle{IEEEtran}
\bibliography{zeroth_order}
\appendix \label{sec:app}

\section*{Proof of Lemma~\ref{zerosg:lemma:sg2-T}}\label{proof:lemma1}
\begin{proof}

Consider the following Lyapunov candidate function 
\begin{align}
W_{k}= &\underbrace{\frac{1}{2}\|\bsx_{k}\|^2_{\bsK}}_{W_{1, k}} + \underbrace{\frac{1}{2}\Big\|\bsv_k+\frac{1}{\beta}\bsg_k^0\Big\|^2_{\bsQ+\kappa_1\bsK}}_{W_{2, k}} \nonumber \\ &+ \underbrace{\bsx_k^\top\bsK\Big(\bm{v}_k+\frac{1}{\beta}\bsg_k^0\Big)}_{W_{3, k}} + \underbrace{n(f(\bar{x}_k)-f^*)}_{W_{4, k}}
\end{align}
where $\bsQ=R\Lambda^{-1}_1R^{\top}\otimes {\bm I}_p$.
Additionally, we denote $g^s_{i,k}=\nabla \mathbb{E}[f_i(x+\delta_{i,k} e_{i})]$, $\bsg^s_k=\col(g^s_{1,k},\dots,g^s_{n,k})$, $\bar{\bsg}^s_k=\bsH\bsg^s_k$, $\bar{g}^e_k=\frac{1}{n}({\bm 1}_n^\top\otimes{\bm I}_p)\bsg^e_k$, and $\bar{\bsg}^e_k={\bm 1}_n\otimes\bar{g}^e_k=\bsH\bsg^e_k$.

(i) We have
\begin{align}
&\mathbb{E}[W_{1,k+1}]
=\mathbb{E}\Big[\frac{1}{2}\|\bm{x}_{k+1} \|^2_{\bsK}\Big]\nonumber\\
&\overset{\mathrm{Eq.~\ref{zerosg:alg:random-pd-compact-x}}}{=\joinrel=}\mathbb{E}\Big[\frac{1}{2}\|\bm{x}_k-\eta(\alpha\bsL\bm{x}_k+\beta\bm{v}_k+\bsg^e_k) \|^2_{\bsK}\Big]\nonumber\\
&\overset{\text{(a)}}{=}\mathbb{E}\Big[\frac{1}{2}\|\bm{x}_k\|^2_{\bsK}-\eta\alpha\|\bsx_k\|^2_{\bsL}
+\frac{1}{2}\eta^2\alpha^2\|\bsx_k\|^2_{\bsL^2}
\nonumber\\
&~~~-\eta\beta\bsx^\top_k({\bm I}_{np}-\eta\alpha\bsL)\bsK\Big(\bm{v}_k+\frac{1}{\beta}\bsg^e_k\Big)\nonumber\\
&~~~+\frac{1}{2}\eta^2\beta^2\Big\|\bm{v}_k+\frac{1}{\beta}\bsg^e_k\Big\|^2_{\bsK}\Big]\nonumber\\
&\overset{\text{(b)}}{=}W_{1,k}-\|\bsx_k\|^2_{\eta\alpha\bsL
-\frac{1}{2}\eta^2\alpha^2\bsL^2}\nonumber\\
&~~~-\eta\beta\bsx^\top_k({\bm I}_{np}-\eta\alpha\bsL)\bsK\Big(\bm{v}_k
+\frac{1}{\beta}\bsg^s_k\Big)\nonumber\\
&~~~+\frac{1}{2}\eta^2\beta^2\mathbb{E}\Big[\Big\|\bm{v}_k+\frac{1}{\beta}\bsg_k^0
+\frac{1}{\beta}\bsg^e_k-\frac{1}{\beta}\bsg_k^0\Big\|^2_{\bsK}\Big]\nonumber\\
&\overset{\text{(c)}}{\le} W_{1,k}-\|\bsx_k\|^2_{\eta\alpha\bsL
-\frac{1}{2}\eta^2\alpha^2\bsL^2}\nonumber\\
&~~~-\eta\beta\bsx^\top_k\bsK\Big(\bm{v}_k+\frac{1}{\beta}\bsg_k^0\Big)\nonumber\\
&~~~+\frac{1}{2}\eta\|\bm{x}_k\|^2_{\bsK}
+\frac{1}{2}\eta\|\bsg^s_k-\bsg_k^0\|^2\nonumber\\
&~~~+\frac{1}{2}\eta^2\alpha^2\|\bm{x}_k\|^2_{\bsL^2}
+\frac{1}{2}\eta^2\beta^2\Big\|\bm{v}_k+\frac{1}{\beta}\bsg_k^0\Big\|^2_{\bsK}\nonumber\\
&~~~+\frac{1}{2}\eta^2\alpha^2\|\bm{x}_k\|^2_{\bsL^2}
+\frac{1}{2}\eta^2\|\bsg^s_k-\bsg_k^0\|^2\nonumber\\
&~~~+\eta^2\beta^2\Big\|\bm{v}_k+\frac{1}{\beta}\bsg_k^0\Big\|^2_{\bsK}
+\eta^2\mathbb{E}[\|\bsg^e_k-\bsg_k^0\|^2]\nonumber\\
&\overset{\text{(d)}}{\le} W_{1,k}-\|\bsx_k\|^2_{\eta\alpha\bsL-\frac{1}{2}\eta\bsK
-\frac{3}{2}\eta^2\alpha^2\bsL^2-\eta(1+5\eta)L_f^2\bsK}
\nonumber\\
&~~~-\eta\beta\bsx^\top_k\bsK\Big(\bm{v}_k+\frac{1}{\beta}\bsg_k^0\Big)
+\Big\|\bm{v}_k+\frac{1}{\beta}\bsg_k^0\Big\|^2_{\frac{3}{2}\eta^2\beta^2\bsK}\nonumber\\
&~~~+nL_f^2\eta\Big[\frac{p}{4}+(\frac{p}{4}+4)\eta\Big]\delta_k^2+2\eta^2\mathbb{E}[\|\bsg^e_k\|^2],\label{zerosg:v1k}
\end{align}
where (a) holds due to Lemma~1 and~2 in~\cite{Yi2018distributed}; (b) holds due to $\mathbb{E}[\bsg^e_k] = \bsg^s_k$ and that $x_{i,k}$ and $v_{i,k}$ are independent of $u_{i,k}$ and $\xi_{i,k}$; (c) holds due to the Cauchy--Schwarz inequality and $\rho(\bsK)=1$; (d) holds due to $\|\bsg^s_k-\bsg_k^0\|^2\le 2L_f^2\|\bsx_{k}\|^2_{\bsK}+\frac{np}{2}L_f^2\delta_k^2$ and $\mathbb{E}[\|\bsg_k^0-\bsg^e_k\|^2]\le 4L_f^2\|\bsx_{k}\|^2_{\bsK}+4nL_f^2\delta_k^2+2\mathbb{E}[\|\bsg^e_k\|^2]$.

(ii)
\begin{align}
W_{2,k+1}
=\frac{1}{2}\Big\|\bsv_{k+1}+\frac{1}{\beta}\bsg_{k+1}^0\Big\|^2_{\bsQ+\kappa_1\bsK}\label{zerosg:v2k-1}
\end{align}
\begin{align}
&\overset{\mathrm{Eq.~\ref{zerosg:alg:random-pd-compact-v}}}{=\joinrel=}\frac{1}{2}\Big\|\bm{v}_k+\frac{1}{\beta}\bsg_{k}^0+\eta\beta\bsL\bm{x}_k
+\frac{1}{\beta}(\bsg_{k+1}^0-\bsg_{k}^0) \Big\|^2_{\bsQ+\kappa_1\bsK}\nonumber\\
&\overset{\text{(e)}}{=}W_{2,k}+\eta\beta\bsx^\top_k(\bsK+\kappa_1\bsL)\Big(\bm{v}_k+\frac{1}{\beta}\bsg_k^0\Big)\nonumber\\
&~~~+\|\bsx_k\|^2_{\frac{1}{2}\eta^2\beta^2(\bsL+\kappa_1\bsL^2)}
+\frac{1}{2\beta^2}\Big\|\bsg_{k+1}^0-\bsg_{k}^0\Big\|^2_{\bsQ+\kappa_1\bsK}\nonumber\\
&~~~+\frac{1}{\beta}\Big(\bm{v}_k+\frac{1}{\beta}\bsg_{k}^0
+\eta\beta\bsL\bm{x}_k\Big)^\top(\bsQ
+\kappa_1\bsK)(\bsg_{k+1}^0-\bsg_{k}^0)\nonumber\\
&\overset{\text{(f)}}{\le} W_{2,k}+\eta\beta\bsx^\top_k(\bsK+\kappa_1\bsL)\Big(\bm{v}_k+\frac{1}{\beta}\bsg_k^0\Big)\nonumber\\
&~~~+\|\bsx_k\|^2_{\frac{1}{2}\eta^2\beta^2(\bsL+\kappa_1\bsL^2)}
+\frac{1}{2\beta^2}\|\bsg_{k+1}^0-\bsg_{k}^0\|^2_{\bsQ+\kappa_1\bsK}\nonumber\\
&~~~+\frac{\eta}{2}\Big\|\bm{v}_k+\frac{1}{\beta}\bsg_{k}^0\Big\|^2_{\bsQ+\kappa_1\bsK}
+\frac{1}{2\eta\beta^2}\|\bsg_{k+1}^0-\bsg_{k}^0\|^2_{\bsQ+\kappa_1\bsK}\nonumber\\
&~~~+\frac{1}{2}\eta^2\beta^2\|\bsL\bm{x}_k\|^2_{\bsQ+\kappa_1\bsK}
+\frac{1}{2\beta^2}\|\bsg_{k+1}^0-\bsg_{k}^0\|^2_{\bsQ+\kappa_1\bsK}\nonumber\\
&\overset{\text{(g)}}{=} W_{2,k}+\eta\beta\bsx^\top_k(\bsK+\kappa_1\bsL)\Big(\bm{v}_k+\frac{1}{\beta}\bsg_k^0\Big)\nonumber\\
&~~~+\|\bsx_k\|^2_{\eta^2\beta^2(\bsL+\kappa_1\bsL^2)}
+\Big\|\bm{v}_k+\frac{1}{\beta}\bsg_{k}^0
\Big\|^2_{\frac{1}{2}\eta(\bsQ+\kappa_1\bsK)}\nonumber\\
&~~~+\frac{1}{\beta^2}\Big(1+\frac{1}{2\eta}\Big)
\|\bsg_{k+1}^0-\bsg_{k}^0\|^2_{\bsQ+\kappa_1\bsK}\nonumber\\
&\overset{\text{(h)}}{\le} W_{2,k}+\eta\beta\bsx^\top_k(\bsK+\kappa_1\bsL)\Big(\bm{v}_k+\frac{1}{\beta}\bsg_k^0\Big)\nonumber\\
&~~~+\|\bsx_k\|^2_{\eta^2\beta^2(\bsL+\kappa_1\bsL^2)}
+\Big\|\bm{v}_k+\frac{1}{\beta}\bsg_{k}^0
\Big\|^2_{\frac{1}{2}\eta(\bsQ+\kappa_1\bsK)}\nonumber\\
&~~~+\frac{1}{\beta^2}\Big(1+\frac{1}{2\eta}\Big)\Big(\frac{1}{\rho_2(L)}+\kappa_1\Big)
\|\bsg_{k+1}^0-\bsg_{k}^0\|^2\nonumber\\
&\overset{\text{(i)}}{\le} W_{2,k}+\eta\beta\bsx^\top_k(\bsK+\kappa_1\bsL)\Big(\bm{v}_k+\frac{1}{\beta}\bsg_k^0\Big)\nonumber\\
&~~~+\|\bsx_k\|^2_{\eta^2\beta^2(\bsL+\kappa_1\bsL^2)}
+\Big\|\bm{v}_k+\frac{1}{\beta}\bsg_{k}^0
\Big\|^2_{\frac{1}{2}\eta(\bsQ+\kappa_1\bsK)}\nonumber\\
&~~~+\frac{\eta}{\beta^2}\Big(\eta+\frac{1}{2}\Big)
\Big(\frac{1}{\rho_2(L)}+\kappa_1\Big)L_f^2\|\bar{\bsg}^e_{k}\|^2,\label{zerosg:v2k-2}
\end{align}
where (e) holds due to (a) holds due to Lemma~1 and~2 in~\cite{Yi2018distributed}; (f) holds due to the Cauchy--Schwarz inequality; (g) holds due to Lemma~1 and~2 in~\cite{Yi2018distributed}; (h) holds due to $\rho(\bsQ+\kappa_1\bsK)\le\rho(\bsQ)+\kappa_1\rho(\bsK)$, and $\rho(\bsK)=1$; (i) holds due to $\|\bsg^0_{k+1}-\bsg^0_{k}\|^2\le \eta^2L_f^2\|\bar{\bsg}^e_{k}\|^2
\le\eta^2L_f^2\|\bsg^e_{k}\|^2$.

Moreover, we have the following two inequalities hold:
\begin{equation}\label{zerosg:v2k-3}
\|\bsg_{k+1}^0\|^2_{\bsQ+\kappa_1\bsK}
\le\Big(\frac{1}{\rho_2(L)}+\kappa_1\Big)\|\bsg_{k+1}^0\|^2.
\end{equation}

\begin{align}\label{zerosg:v2k-4}
\Big\|\bm{v}_k+\frac{1}{\beta_k}\bsg_{k}^0\Big\|^2_{\bsQ+\kappa_1\bsK}
\le\Big(\frac{1}{\rho_2(L)}+\kappa_1\Big)\Big\|\bm{v}_k+\frac{1}{\beta_k}\bsg_{k}^0
\Big\|^2_{\bsK}.
\end{align}

Then, from \eqref{zerosg:v2k-1}--\eqref{zerosg:v2k-4}, we have
\begin{align}
&W_{2,k+1}\nonumber\\
&\le W_{2,k}
+\eta\beta\bsx^\top_k(\bsK+\kappa_1\bsL)\Big(\bm{v}_k+\frac{1}{\beta}\bsg_k^0\Big)\nonumber\\
&~~~+\frac{1}{2}\eta\Big(\frac{1}{\rho_2(L)}+\kappa_1\Big)
\Big\|\bm{v}_k+\frac{1}{\beta}\bsg_{k}^0\Big\|^2_{\bsK}\nonumber\\
&~~~+\|\bsx_k\|^2_{\eta^2\beta^2(\bsL+\kappa_1\bsL^2)}\nonumber\\
&~~~+\frac{\eta}{\beta^2}\Big(\eta+\frac{1}{2}\Big)
\Big(\frac{1}{\rho_2(L)}+\kappa_1\Big)L_f^2\|\bar{\bsg}^e_{k}\|^2.
\label{zerosg:v2k}
\end{align}

(iii) We have
\begin{align}
W_{3,k+1}=\bsx_{k+1}^\top\bsK\Big(\bm{v}_{k+1}+\frac{1}{\beta}\bsg_{k+1}^0\Big).\label{zerosg:v3k-1}
\end{align}

\begin{align}
&\mathbb{E}\Big[W_{3,k+1}\Big]\nonumber\\
&\overset{\mathrm{Eq.~\ref{zerosg:alg:random-pd-compact}}}{=\joinrel=}\mathbb{E}\Big[(\bm{x}_k-\eta(\alpha\bsL\bm{x}_k+\beta\bm{v}_k
+\bsg_k^0+\bsg^e_k-\bsg_k^0))^\top
\nonumber\\
&~~~\bsK\Big(\bm{v}_k+\frac{1}{\beta}\bsg_{k}^0+\eta\beta\bsL\bm{x}_k
+\frac{1}{\beta}(\bsg_{k+1}^0-\bsg_{k}^0)\Big)\Big]\nonumber\\
&\overset{\text{(j)}}{=}\bm{x}_k^\top(\bsK-\eta(\alpha+\eta\beta^2)\bsL)\Big(\bm{v}_k+\frac{1}{\beta}\bsg_{k}^0\Big)+\|\bm{x}_k\|^2_{\eta\beta(\bsL-\eta\alpha\bsL^2)}\nonumber\\
&~~~+\frac{1}{\beta}\bm{x}_k^\top(\bsK-\eta\alpha\bsL)
\mathbb{E}[\bsg_{k+1}^0-\bsg_{k}^0]-\eta\beta\Big\|\bm{v}_k+\frac{1}{\beta}\bsg_{k}^0\Big\|^2_{\bsK}\nonumber\\
&~~~-\eta\Big(\bm{v}_k+\frac{1}{\beta}\bsg_{k}^0\Big)^\top\bsK
\mathbb{E}[\bsg_{k+1}^0-\bsg_{k}^0]\nonumber\\
&~~~-\eta(\bsg_k^s-\bsg_k^0)^\top
\bsK\Big(\bm{v}_k+\frac{1}{\beta}\bsg_{k}^0+\eta\beta\bsL\bm{x}_k\Big)\nonumber\\
&~~~-\frac{1}{\beta}\mathbb{E}[\eta(\bsg^e_k-\bsg_k^0)^\top
\bsK(\bsg_{k+1}^0-\bsg_{k}^0)]\nonumber\\
&\overset{\text{(k)}}{\le}\bm{x}_k^\top(\bsK-\eta\alpha\bsL)\Big(\bm{v}_k+\frac{1}{\beta}\bsg_{k}^0\Big)
+\frac{1}{2}\eta^2\beta^2\|\bsL\bsx_k\|^2\nonumber\\
&~~~+\frac{1}{2}\eta^2\beta^2\Big\|\bm{v}_k+\frac{1}{\beta}\bsg_{k}^0\Big\|^2_{\bsK}
+\|\bm{x}_k\|^2_{\eta\beta(\bsL-\eta\alpha\bsL^2)}\nonumber\\
&~~~+\frac{1}{2}\eta\|\bm{x}_k\|^2_\bsK
+\frac{1}{2\eta\beta^2}\mathbb{E}[\|\bsg_{k+1}^0-\bsg_{k}^0\|^2\nonumber\\
&~~~+\frac{1}{2}\eta^2\alpha^2\|\bsL\bm{x}_k\|^2
+\frac{1}{2\beta^2}\mathbb{E}[\|\bsg_{k+1}^0-\bsg_{k}^0\|^2]\nonumber\\
&~~~
-\eta\beta\Big\|\bm{v}_k+\frac{1}{\beta}\bsg_{k}^0\Big\|^2_{\bsK}\nonumber\\
&~~~+\frac{1}{2}\eta^2\beta^2\Big\|\bm{v}_k+\frac{1}{\beta}\bsg_{k}^0\Big\|^2_{\bsK}
+\frac{1}{2\beta^2}\mathbb{E}[\|\bsg_{k+1}^0-\bsg_{k}^0\|^2]\nonumber\\
&~~~+\frac{1}{2}\eta\|\bsg_k^s-\bsg_k^0\|^2
+\frac{1}{2}\eta\Big\|\bm{v}_k+\frac{1}{\beta}\bsg_{k}^0\Big\|^2_{\bsK}\nonumber\\
&~~~
+\frac{1}{2}\eta^2\|\bsg_k^s-\bsg_k^0\|^2
+\frac{1}{2}\eta^2\beta^2\|\bsL\bm{x}_k\|^2\nonumber\\
&~~~
+\frac{1}{2}\eta^2\mathbb{E}[\|\bsg^e_k-\bsg_k^0\|^2]
+\frac{1}{2\beta^2}\mathbb{E}[\|\bsg_{k+1}^0-\bsg_{k}^0\|^2]\nonumber\\
&\overset{\text{(l)}}{\le} W_{3,k}
-\eta\alpha\bm{x}_k^\top\bsL\Big(\bm{v}_k+\frac{1}{\beta}\bsg_{k}^0\Big)\nonumber\\
&~~~+\|\bm{x}_k\|^2_{\eta(\beta\bsL+\frac{1}{2}\bsK)
+\eta^2(\frac{1}{2}\alpha^2-\alpha\beta+\beta^2)\bsL^2
+\eta(1+3\eta)L_f^2\bsK}\nonumber\\
&~~~+\eta^2\Big[1 + (\frac{1}{2\eta\beta^2}+\frac{3}{2\beta^2}\Big)L_f^2\Big]\mathbb{E}[\|{\bsg}^e_{k}\|^2]\nonumber\\
&~~~+nL_f^2\eta\Big[\frac{p}{4}+(\frac{p}{4}+2)\eta\Big]\delta^2_k\nonumber\\
&~~~-\Big\|\bm{v}_k+\frac{1}{\beta}\bsg_{k}^0\Big\|^2_{\eta(\beta-\frac{1}{2}
-\eta\beta^2)\bsK}.
\label{zerosg:v3k-2}
\end{align}

where (j) holds since $K_nL=LK_n=L$, $\mathbb{E}[\bsg^e_k] = \bsg^s_k$, and that $x_{i,k}$ and $v_{i,k}$ are independent; (k) holds due to the Cauchy--Schwarz inequality, the Jensen's inequality, and $\rho(\bsK)=1$; (l) holds due to $\|\bsg^s_k-\bsg_k^0\|^2\le 2L_f^2\|\bsx_{k}\|^2_{\bsK}+\frac{np}{2}L_f^2\delta_k^2$ and $\mathbb{E}[\|\bsg_k^0-\bsg^e_k\|^2]\le 4L_f^2\|\bsx_{k}\|^2_{\bsK}+4nL_f^2\delta_k^2+2\mathbb{E}[\|\bsg^e_k\|^2]$..

(iv) We have
\begin{align}
&\mathbb{E}[W_{4,k+1}]=\mathbb{E}[n(f(\bar{x}_{k+1})-f^*)]
=\mathbb{E}[\tilde{f}(\bar{\bsx}_{k+1})-nf^*]\nonumber\\
&=\mathbb{E}[\tilde{f}(\bar{\bsx}_k)-nf^*+\tilde{f}(\bar{\bsx}_{k+1})
-\tilde{f}(\bar{\bsx}_k)]\nonumber\\
&\overset{\text{(m)}}{\le}\mathbb{E}[\tilde{f}(\bar{\bsx}_k)-nf^*
-\eta(\bar{\bsg}_{k}^e)^\top\bsg^0_k
+\frac{1}{2}\eta^2L_f\|\bar{\bsg}_{k}^e\|^2]\nonumber\\
&\overset{\text{(n)}}{=}W_{4,k}
-\eta(\bar{\bsg}_{k}^s)^\top\bar{\bsg}^0_k
+\frac{1}{2}\eta^2L_f\mathbb{E}[\|\bar{\bsg}_{k}^e\|^2]\nonumber\\
&\overset{\text{(o)}}{=}W_{4,k}
-\frac{1}{2}\eta(\bar{\bsg}_{k}^s)^\top(\bar{\bsg}^s_k+\bar{\bsg}^0_k-\bar{\bsg}^s_k)\nonumber\\
&~~~-\frac{1}{2}\eta(\bar{\bsg}^s_{k}-\bar{\bsg}^0_k+\bar{\bsg}^0_k)^\top\bar{\bsg}^0_k
+\frac{1}{2}\eta^2L_f\mathbb{E}[\|\bar{\bsg}_{k}^e\|^2]\nonumber\\
&\overset{\text{(p)}}{\le} W_{4,k}-\frac{1}{4}\eta(\|\bar{\bsg}^s_{k}\|^2
-\|\bar{\bsg}^0_k-\bar{\bsg}^s_k\|^2+\|\bar{\bsg}_{k}^0\|^2\nonumber\\
&~~~-\|\bar{\bsg}^0_k-\bar{\bsg}^s_k\|^2)
+\frac{1}{2}\eta^2L_f\mathbb{E}[\|\bar{\bsg}_{k}^e\|^2]\nonumber\\
&\overset{\text{(q)}}{\le} W_{4,k}-\frac{1}{4}\eta\|\bar{\bsg}^s_{k}\|^2
+\|\bsx_k\|^2_{\eta L_f^2\bsK}\nonumber\\
&~~~+\frac{np}{4}\eta L_f^2\delta^2_k-\frac{1}{4}\eta\|\bar{\bsg}_{k}^0\|^2
+\frac{1}{2}\eta^2L_f\mathbb{E}[\|\bar{\bsg}^e_{k}\|^2],\label{zerosg:v4k}
\end{align}
where (m) holds since that $\tilde{f}$ is smooth; (n) holds due to $\mathbb{E}[\bsg^e_k] = \bsg^s_k$, $x_{i,k}$ and $v_{i,k}$ are independent; (o) holds due to $(\bar{\bsg}_{k}^s)^\top\bsg^0_k=(\bsg_{k}^s)^\top\bsH\bsg^0_k=(\bsg_{k}^s)^\top\bsH\bsH\bsg^0_k
=(\bar{\bsg}_{k}^s)^\top\bar{\bsg}^0_k$; (p) holds due to the Cauchy--Schwarz inequality; and (q) holds due to  $\|\bsg^s_k-\bsg_k^0\|^2\le 2L_f^2\|\bsx_{k}\|^2_{\bsK}+\frac{np}{2}L_f^2\delta_k^2$.

(v) 
Define $W_{k+1} = \sum_{i = 1}^{4}W_{i, k+1}$ and then we have the following inequality holds.

\begin{align}
&\mathbb{E}[W_{k+1}]\nonumber\\
&\le W_{k}-\|\bsx_k\|^2_{\eta\alpha\bsL-\frac{1}{2}\eta\bsK
-\frac{3}{2}\eta^2\alpha^2\bsL^2-\eta(1+5\eta)L_f^2\bsK}\nonumber\\
&~~~+\Big\|\bm{v}_k+\frac{1}{\beta}\bsg_k^0\Big\|^2_{\frac{3}{2}\eta^2\beta^2\bsK}+nL_f^2\eta\Big[\frac{p}{4}+(\frac{p}{4}+4)\eta\Big]\delta_k^2\nonumber\\
&~~~+2\eta^2
\mathbb{E}[\|\bsg^e_k\|^2]+\frac{1}{2}\eta\Big(\frac{1}{\rho_2(L)}+\kappa_1\Big)
\Big\|\bm{v}_k+\frac{1}{\beta}\bsg_{k}^0\Big\|^2_{\bsK}\nonumber\\
&~~~+\|\bsx_k\|^2_{\eta^2\beta^2(\bsL+\kappa_1\bsL^2)}\nonumber\\
&~~~+\frac{\eta}{\beta^2}\Big(\eta+\frac{1}{2}\Big)\Big(\frac{1}{\rho_2(L)}+\kappa_1\Big)L_f^2
\mathbb{E}[\|\bar{\bsg}^e_{k}\|^2]\nonumber\\
&~~~+\|\bm{x}_k\|^2_{\eta(\beta\bsL+\frac{1}{2}\bsK)
+\eta^2(\frac{1}{2}\alpha^2-\alpha\beta+\beta^2)\bsL^2
+\eta(1+3\eta)L_f^2\bsK}\nonumber\\
&~~~+\eta^2\Big[1 + (\frac{1}{2\eta\beta^2}+\frac{3}{2\beta^2}\Big)L_f^2\Big]\mathbb{E}[\|{\bsg}^e_{k}\|^2]\nonumber\\
&~~~+nL_f^2\eta\Big[\frac{p}{4}+(\frac{p}{4}+2)\eta\Big]\delta^2_k\nonumber\\
&~~~-\Big\|\bm{v}_k+\frac{1}{\beta}\bsg_{k}^0\Big\|^2_{\eta(\beta-\frac{1}{2}
-\eta\beta^2)\bsK}-\frac{1}{4}\eta\|\bar{\bsg}^s_{k}\|^2\nonumber\\
&~~~+\|\bsx_k\|^2_{\eta L_f^2\bsK}\nonumber\\
&~~~+\frac{np}{4}\eta L_f^2\delta^2_k-\frac{1}{4}\eta\|\bar{\bsg}_{k}^0\|^2
+\frac{1}{2}\eta^2 L_f\mathbb{E}[\|\bar{\bsg}^e_{k}\|^2]\nonumber\\
&\overset{\text{(r)}}{\le} W_{k}-\|\bsx_k\|^2_{\eta\bsM_{1}-\eta^2\bsM_{2}-b_{1}\bsK}-\Big\|\bm{v}_k+\frac{1}{\beta}\bsg_{k}^0\Big\|^2_{b^0_{2}\bsK}\nonumber\\
&~~~-\eta\Big(\frac{1}{4}-6c_1(p-1)\Big)\|\bar{\bsg}^0_{k}\|^2\nonumber\\
&~~~+\underbrace{c_1\Big[ 6(p-1)\sigma^2_2+(\frac{3}{n_c}+2)p\sigma^2_1 \Big]}_{\mathcal{O}(np)\eta^2}+\underbrace{c_{3}\eta\delta_k^2L_f^2}_{\mathcal{O}(np^2)\eta\delta_k^2},
\label{zerosg:vkLya}
\end{align}
where (r)  holds due to \eqref{zerosg:rand-grad-esti2}, \eqref{zerosg:rand-grad-esti4}, $\alpha=\kappa_1\beta$,  $\eta=\frac{\kappa_2}{\beta}$, and
\begin{align*}
\bsM_{1}&=(\alpha-\beta)\bsL-\Big(1+3L_f^2+\frac{6L_f^4\kappa_1}{\beta^2}(p-1)\Big)\bsK,\\
\bsM_{2}&=\beta^2\bsL+(2\alpha^2+\beta^2)\bsL^2+8L_f^2\bsK\\
&\quad+6(p-1)\Big(3+\frac{1}{2}L_f+\frac{2L_f^{2}}{\beta^2}\kappa_1 + \frac{L_f^2}{2\beta^2}\Big)\bsK,\\
\kappa_3&=\frac{1}{\rho_2(L)}+\kappa_1+1,\\
b^0_{2}&=\frac{1}{2}\eta(2\beta-\kappa_3)-2.5\kappa_2^2,\\
b_{1}
&=6p\kappa_3L_f^4\frac{\eta}{\beta^2}+12p(\kappa_3+1)L_f^4\frac{\eta^2}{\beta^2},\\
c_1&= \Big(3+\frac{1}{2}L_f+\frac{2L_f^{2}}{\beta^2}\kappa_1 + \frac{L_f^2}{2\beta^2}\Big)n\eta^2 + \frac{L_f^2\kappa_1}{\beta^2}n\eta,\\
c_2 &=\frac{3}{4}pn+\eta n(\frac{p}{2} +6) +\frac{p^2L_f^2}{2\beta^2}\kappa_1,\\
c_3 &=c_2 + \Big(c_1 - \frac{L_f^2\kappa_1}{\beta^2}n\eta\Big)p^2\eta.		
\end{align*}

Consider $p\geq 1$, $\alpha=\kappa_1\beta$, $\kappa_1>1$, $\beta$ is large enough, and $\eta=\frac{\kappa_2}{\beta}$, we have
\begin{align}
\eta\bsM_{1}&\ge [(\kappa_1 - 1)\rho_2(L) - 1]\kappa_2\bsK.\label{zerosg:m1-rand-pd}\\
\eta^2\bsM_{2}&\le [\rho(L)+(2\kappa_1^2+1)\rho(L^2)+1]\kappa_2^2\bsK.\label{zerosg:m2-rand-pd}\\
b^0_{2}&\ge\frac{1}{2}(\kappa_2 - 5\kappa_2^2).\label{zerosg:vkLya-b1}
\end{align}

From \eqref{zerosg:vkLya}--\eqref{zerosg:vkLya-b1}, let $\kappa_4 = [(\kappa_1 - 1)\rho_2(L) - 1]\kappa_2- [\rho(L)+(2\kappa_1^2+1)\rho(L^2)+1]\kappa_2^2 $ we know that \eqref{zerosg:sgproof-vkLya2T} holds.

Similar to the way to get \eqref{zerosg:sgproof-vkLya2T}, we have \eqref{zerosg:v4kspeed}.

\end{proof}

\end{document}